 \newtheorem{theorem}{Theorem}
 \newtheorem{lemma}[theorem]{Lemma}
 \newtheorem{proposition}[theorem]{Proposition}
\theoremstyle{definition}
 \newtheorem{remark}{Remark}
\def\Z{\ensuremath{\mathbb Z}} 
\newcommand{\GGG}{\Gamma}
\newcommand{\PPP}{\Psi}
\newcommand{\aaa}{\alpha}
\newcommand{\app}{\approx}
\newcommand{\bbb}{\beta}
\newcommand{\capp}{{\cal P}}
\newcommand{\dd}{\partial}
\newcommand{\ds}{\displaystyle}
\newcommand{\ess}{\emptyset}
\newcommand{\fff}{{\varphi}}
\newcommand{\mmm}{\mu}
\newcommand{\MM}{\mathfrak{M}}
\newcommand{\oo}{\infty}
\newcommand{\uI}{{\underline{I}}}
\newcommand{\peq}{\preceq}
\newcommand{\seq}{\succeq}
\newcommand{\sm}{\setminus}
\newcommand{\sss}{\sigma}
\newcommand{\TTT}{\Theta}
\newcommand{\tuM}{\widetilde{{\underline{M}}}}
\newcommand{\uM}{\underline{M}}
\newcommand{\uA}{\underline{A}}
\renewcommand{\ggg}{\gamma}
\renewcommand{\lll}{\lambda}
\title{Equi-topological entropy curves for skew tent maps in the square}
\author{
Zolt\'an Buczolich\thanks{Both 
authors  were supported by the Hungarian
National Foundation for Scientific Research Grant K104178.
\newline\indent {\it 2000 Mathematics Subject
Classification:} Primary 37B40; Secondary 28D20, 37E05, 37E20.
\newline\indent {\it Keywords:} skew tent map, topological entropy, level set},
Department of Analysis, E\"otv\"os Lor\'and\\
University, P\'azm\'any P\'eter S\'et\'any 1/c, 1117 Budapest, Hungary\\
email: buczo@cs.elte.hu\\
{\tt www.cs.elte.hu/\hbox{$\sim$}buczo}\\
 \\ and\\
 \\
Gabriella Keszthelyi$^{*}$\!\!,
 Department of Analysis, E\"otv\"os Lor\'and\\
University, P\'azm\'any P\'eter S\'et\'any 1/c, 1117 Budapest, Hungary\\
email: keszthelyig@gmail.com\\
}
\begin{document}

\maketitle
\begin{abstract}
We consider skew tent maps $T_{ { \alpha }, { \beta }}(x)$ such that  $(\aaa,\bbb)\in[0,1]^{2}$ is the turning point of $T\saab$, that is, $T_{ { \alpha }, { \beta }}=\frac{ { \beta }}{ { \alpha }}x$ for $0\leq x \leq { \alpha }$ and $T_{ { \alpha }, { \beta }}(x)=\frac{ { \beta }}{1- { \alpha }}(1-x)$
for $ { \alpha }<x\leq 1$. 
 We denote by $\uM=K(\aaa,\bbb)$ the kneading sequence of
 $T\saab$ and by $h(\aaa,\bbb)$ its topological entropy.
 For a given kneading squence $\uM$ we consider equi-kneading, (or equi-topological entropy, or isentrope) curves $(\aaa,\fff_{\uM}(\aaa))$  such that $K(\aaa,\fff_{\uM}(\aaa))=\uM$. To study the behavior of these curves
 an auxiliary function $\TTT_{\uM}(\aaa,\bbb)$ is introduced.
 For this function $\TTT_{\uM}(\aaa,\fff_{\uM}(\aaa))=0$,
 but it may happen that for some kneading sequences $\TTT_{\uM}(\aaa,\bbb)=0$ for some $\bbb<\fff_{\uM}(\aaa)$ with
 $(\aaa,\bbb)$ still in the dynamically interesting quarter of the unit square. Using $\TTT_{\uM}$ we show that the curves $(\aaa,\fff_{\uM}(\aaa))$ hit the diagonal $\{ (\bbb,\bbb): 0.5<\bbb<1 \}$ almost perpendicularly if $(\bbb,\bbb)$ is close to $(1,1)$.
 Answering a question asked by M. Misiurewicz at a conference we show that  these curves are not necessarily exactly orthogonal to the diagonal, 
 for example for
 $\uM=RLLRC$ the curve $(\aaa,\fff_{\uM}(\aaa))$
 is not orthogonal to the diagonal. On the other hand,
 for $\uM=RLC$ it is.\\
With different parametrization properties of equi-kneading maps for skew tent
maps were considered by J.C. Marcuard, M. Misiurewicz and  E. Visinescu. 
\end{abstract}

\section{Introduction}

Consider a point $(\alpha,\beta)$ in the unit square $[0,1]^2$. Denote by $T_{\alpha,\beta}(x)$ the skew tent map.
\begin{equation}\label{021602a}
T_{\alpha,\beta}(x) =\left\{
\begin{array}{clcr}
			\frac{\beta}{\alpha}x & {\rm if}    & 0 \leq x \leq \alpha,      \\
			\frac{\beta}{1-\alpha}(1-x)    &  {\rm if}  & \alpha< x \leq 1.
	\end{array}
\right.
\end{equation}
\begin{center}
\begin{figure}
\includegraphics{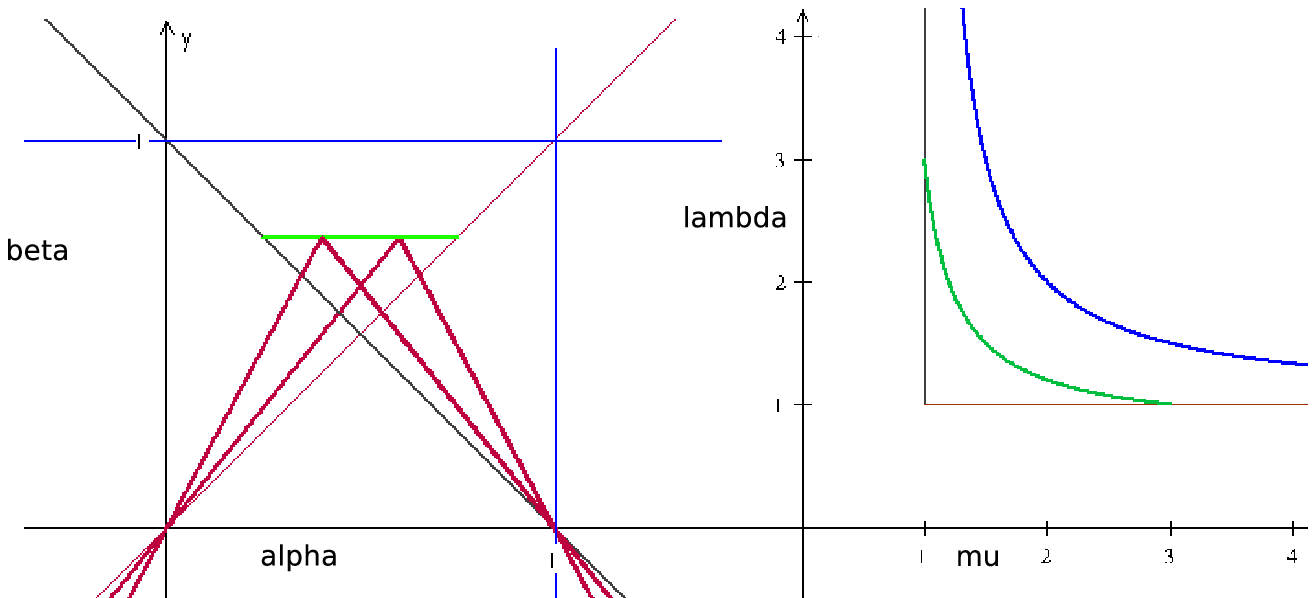}
\caption{{\bf To the left:} Skew tent maps in the $\aaa$-$\bbb$ phase space;
{\bf To the right:} the $\mmm$-$\lll$ parameter space} 
\label{fig0*}
\end{figure}
\end{center}
The topological entropy of $T_{\alpha,\beta}$ is denoted by $h(\alpha,\beta)$. To avoid trivial dynamics we suppose that $0.5< \beta\leq 1$ and
$\alpha \in (1-\beta,\beta)$. We denote by $U$ the region of $[0,1]^2$ consisting of these $[\alpha,\beta]$. Suppose $\beta$ is fixed. The starting
point of our paper was the question about the behavior of the function $h(\alpha)=h(\alpha,\beta).$ The answer to the question about the behavior of
the function $g(\beta)=h(\alpha,\beta)$ with a fixed $\alpha$ is known. In the case of these $\beta's$ where the dynamics of $T_{\alpha,\beta}(x)$ is
nontrivial the function $g(\beta)$ is monotone increasing. Skew tent maps and topological entropy were considered by M. Misiurewicz and E. Visinescu in \cite{[MiVi]}. In this paper a different
parametrization was used. The functions
$$F_{\lambda,\mu}(x) =\left\{
\begin{array}{clcr}
			{1+\lambda}x & {\rm if}   & x \leq 0      \\
			1-\mu x    &  {\rm if}  & x \geq 0
	
		\end{array}
\right.$$
were considered on $\mathbb{R}$. It is rather easy to see that if $\lambda=\frac{\beta}{\alpha}, \; \mu=\frac{\beta}{1-\alpha}$ and $h(x)=(\beta-\alpha)x + \alpha, \; h^{-1}(x)=\frac{x-\alpha}{\beta-\alpha}$ then
\begin{equation}\label{intro2a}
F_{\lambda,\mu}(x)=(h^{-1} \circ T_{\alpha,\beta} \circ h)(x),
\end{equation}
provided that we extend the definition of $T_{\alpha,\beta}$ onto $\mathbb{R}$ in the obvious way.

\begin{wrapfigure}{l}{0pt}
\includegraphics[width=7.0cm]{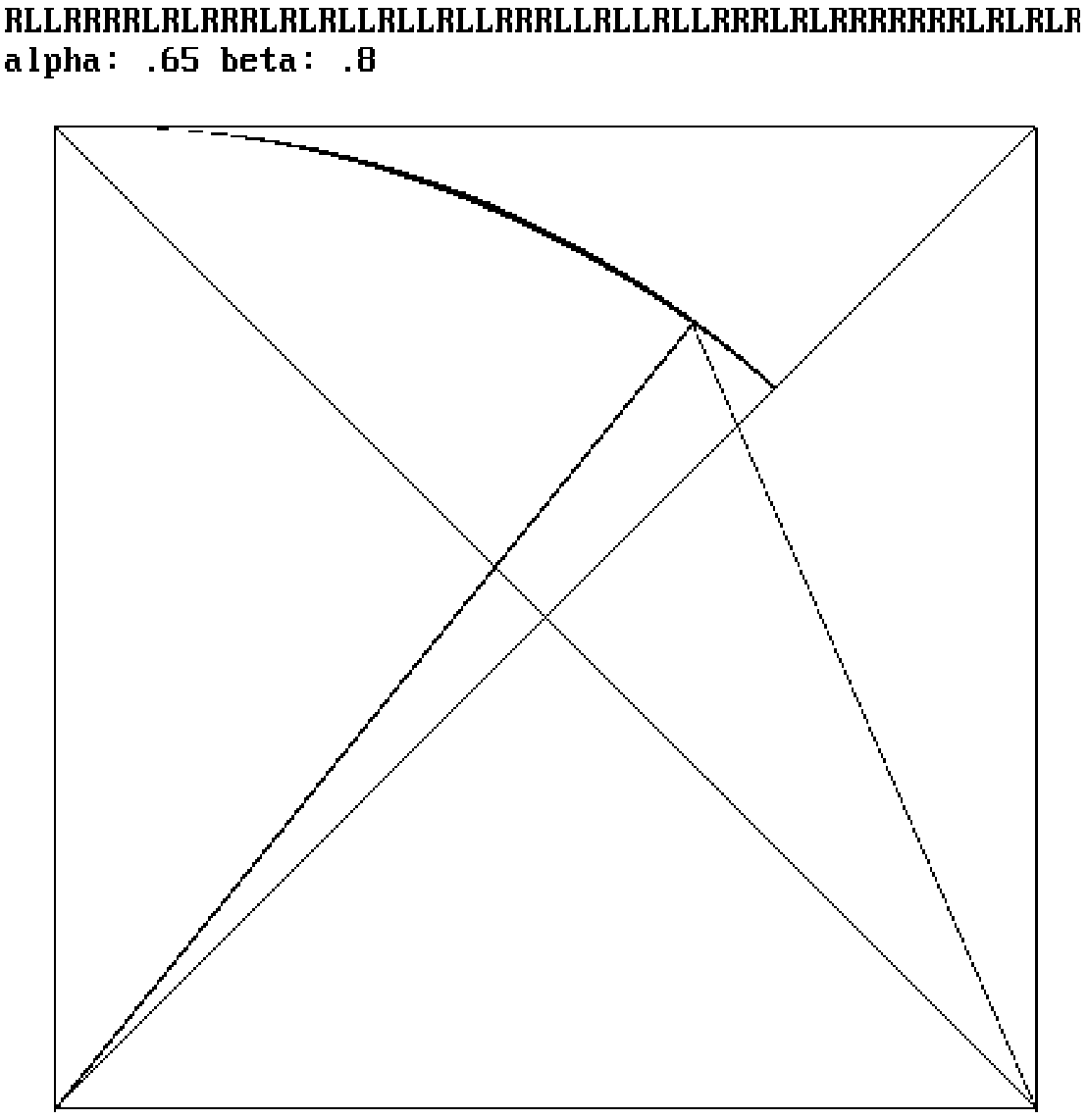}
\caption{Equi-kneading curve for $\aaa=.65$ and $\bbb=.8$} 
\label{fig1*}
\end{wrapfigure}
Results of \cite{[MiVi]} imply that if
$\lambda' \geq \lambda, \; \mu' \geq \mu$ and at least one of these inequalities is sharp, then $h(F_{\lambda',\mu'})>h(F_{\lambda,\mu})$ where
$h(F_{\lambda,\mu})$ denotes the topological entropy of $F_{\lambda,\mu}$. If $\beta$ is fixed and $\alpha$ increases then
$\lambda=\frac{\beta}{\alpha}$ decreases, while $\mu=\frac{\beta}{1-\alpha}$ increases. This implies that the monotonicity  result in \cite{[MiVi]} is not
giving an answer to our question. 
In fact, points $\lambda=\lambda(\alpha,\beta)$ and $\mu=\mu(\alpha,\beta)$ with fixed $\beta$ satisfy
$\frac{1}{\lambda}+\frac{1}{\mu}=\frac{1}{\beta}$, or $\lambda=\frac{1}{\frac{1}{\beta}-\frac{1}{\mu}}$ and hence $\lambda$ is a monotone decreasing
function of $\mu$ along curves in the $(\mu,\lambda)$-plane corresponding to horizontal line segments in the $(\alpha,\beta)$-plane. If $\alpha$ is fixed
and $\beta$ increases then both $\lambda=\frac{\beta}{\alpha}$ and $\mu=\frac{\beta}{1-\alpha}$ increase. 
Therefore, the monotonicity result in \cite{[MiVi]} implies
that all the functions $g(\beta)=h(\alpha,\beta)$ are monotone increasing in our parameter range. It is also worth mentioning that our parameter range $0.5<\beta<1, \; \alpha \in (1-\beta,\beta)$
corresponds to the region bounded by the curves  $\mu=1, \; \lambda=1$ and $\frac{1}{\mu}+\frac{1}{\lambda}=1$ in the $(\mu,\lambda)$-plane. See
Figure \ref{fig0*} (we use the $(\mu,\lambda)$-plane to be consistent with the figure on p.129 of \cite{[MiVi]}).  
On the left half of this figure two tent maps are shown with the same
$\bbb$ parameter. On the right side this constant $\bbb$ curve is also shown.\\
On Figure \ref{fig1*} the equi-kneading region of an individual tent map is illustrated.
On this figure on the top line one can see the kneading sequence calculated by the computer, then $T_{.65,.8}$ is plotted and some pixels with similar first few initial kneadig sequence entries.
On the left half of Figure \ref{fig22*} one can see the region $U$ colored in a way that one similarly colored/shaded connected region means that for $(\aaa,\bbb)$'s in one such region the first eight entries of the kneading squence $K(\aaa,\bbb)$ are the same. On the right half of this figure one can see the image of these regions if the $\lll-\mmm$ parametrization is used. 

To study equi-topological entropy, or equi-kneading curves in the region $U$ we introduce
the auxiliary functions $\TTT_{\uM}$.
Suppose that we have a  given kneading-sequence $\uM$ and 
\begin{equation}\label{041801aa}
\underline{M}^{-}=R\underbrace{L\dots L}_{m_1}R\underbrace{L\dots L}_{m_2}R\underbrace{L\dots L}_{m_3}R \dots .
\end{equation}
We put $\omm_{k}=m_1+m_2+\dots+m_k$ and
\begin{equation}\label{02261a}
  \Theta_{\underline{M}}(\alpha,\beta)=1-\beta+\sum_{k=1}^{\infty}(-1)^{k}\left(\frac{1-\alpha}{\beta}\right)^k\left(\frac{\alpha}{\beta}\right)^{{\overline m}_k}.
\end{equation}
In Theorem \ref{thx} we show
that for $(\aaa,\bbb)\in U$ it follows from $K(\aaa,\bbb)=\uM$ that
$\TTT_{\uM}(\aaa,\bbb)=0$. This means that the equi-topological entropy curve
$\{ (\aaa,\bbb)\in U : K(\aaa,\bbb)=\uM \}$ is a subset of
$\{ (\aaa,\bbb)\in U: \TTT_{\uM}(\aaa,\bbb)=0 \}$, the zero level set of $\TTT_{\uM}$. Based on some computer simulations, it might appear that this level set coincides in $U$ exactly with the equi-topological entropy curve, but this is not true for all parameter values. For example on Figure  
\ref{theta78*} one can see the level set structure  of $\TTT_{K(0.5,0.7)}$ and $\TTT_{K(0.5,0.8)}$ in the upper half of $[0,1]^{2}$.
Similarly colored/shaded connected regions mean not too different $\TTT$ values. The zero level consists of some  quite clearly defined common boundary curves of some of these regions.  
In Theorem \ref{thex} we show that there exists $\uM$ and $(\aaa,\bbb)\in U$
 such that $K(\aaa,\bbb)=\tuM\not=\uM$, but Theorem \ref{felsozona}
 implies that this can happen only when  $K(\aaa,\bbb)=\tuM\prec \uM$.
 On Figure \ref{theta34*} one can see the level set structure of $\TTT$ corresponding to some of these exceptional parameter values.
 Analytic properties of $\TTT_{\uM}$ can help to study the behavior of
 equi-kneading curves $\PPP_{\uM}$ and the functions $h(\aaa)=h(\aaa,\bbb).$
Looking at the equi-kneading curves on the left half of Figure \ref{fig22*} it seems that they are almost perpendicular
 to the diagonal $\aaa=\bbb$ when we are close to $(\aaa,\bbb)=(1,1)$,
 in fact the angle formed by the equi-kneading curve and the diagonal
 tends to right angle as $(\aaa,\aaa)\to (1,1)$.
 To verify this almost perpendicularity property we need further estimates of $\TTT_{\uM}$
 in Section \ref{secfurTTTM}, while in Section \ref{secortho} we prove the almost orthogonality
 property. 
 Since the first differential of $\TTT_{\uM}$ vanishes at these intersections with the diagonal one cannot use implicit differentiation. Instead of this we show that 
 near $(1,1)$ at these intersection saddle points the second differential approximates constant times $x^2-y^2$.
 M. Misiurewicz asked the first author at a conference whether we have
 almost orthogonality, or exact orthogonality. 
 Apparently, as Figure \ref{fig505*} illustrates close to $(1/2,1/2)$
these equi-kneading curves does not seem to be perpendicular to the diagonal. 
 We also see in this Section \ref{secortho} that for $\uM=RLC$
 we have exact orthogonality, while for $\uM=RLLRC$ the slope of the tangent at the intersection
 point with the diagonal equals $\ds -{\sqrt{5}+3\over 2 \sqrt{5}+2}\not=-1$,
 showing that in this case we do not have exact orthogonality.
 See Figure \ref{thetarllrc*}.
 Again $\TTT_{\uM}$ has a saddle point,
 with vanishing first differential at these intersection points
 and hence one cannot use implicit differentiation to compute these tangents.
\begin{center}
\begin{figure}
\includegraphics[height=8.2cm]{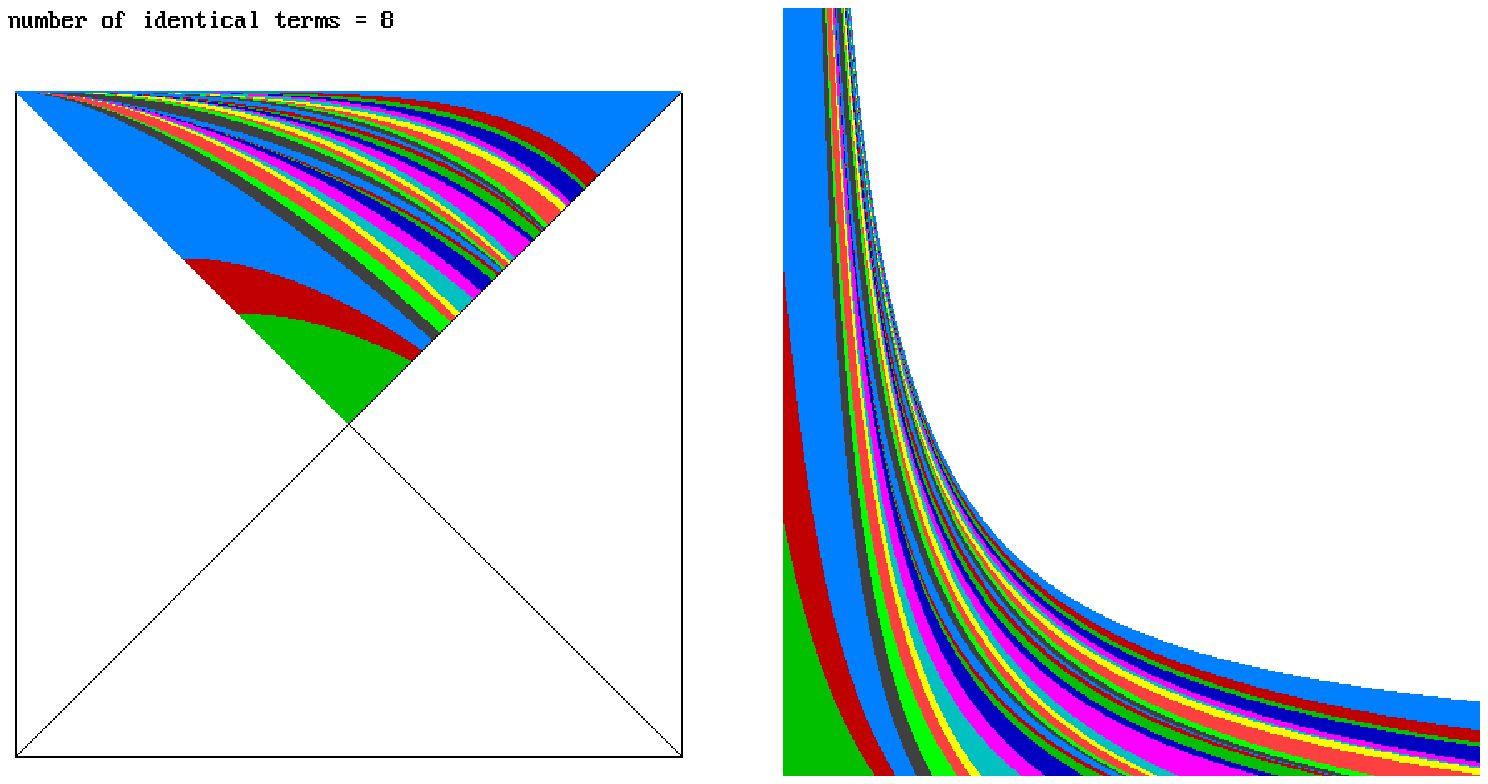}
\caption{equi-kneading regions in the square and in the $\lll-\mmm$ parameter space} 
\label{fig22*}
\end{figure}
\end{center}
This paper is the first publication about our research project. The proof of the conjecture/theorem stating that the  equi-kneading curves are strictly monotone decreasing, which implies that the topological entropy is monotone decreasing along horizontal lines for certain parameter regions is far from trivial
and quite technical. At this time we still try to simplify these lengthy
arguments which are for certain parameter values computer assisted.
For certain critical parameter values (where the infinitely small errors exclude numerical calculations) and for some easy to handle parameter domains we have pure theoretical arguments.  Adding them to the current paper would make this paper
way too lengthy and technical.  

In our paper \cite{[SNR]} we plan to deal with the easy subregions of $U$, these are the at least twice renormalizable systems and the "right half" of the once renormalizable region. In this paper we also plan to discuss results concerning the non-renormalizable systems.

In \cite{[SOR]} we plan to discuss the technically most complicated parameter region, the "left half" of the once renormalizable region.

Some chaotic regions in the parameter space of skew tent maps were considered by \cite{[Bas]}.
Monotonicity of topological entropy for biparametric families, especially
for skew tent maps were considered by several authors for example
see \cite{[AlMa]}, \cite{[Ic]},  \cite{[IcIt]}, \cite{[MaVi]} and \cite{[MiVi]}.
Some further results about skew tent maps (with different parametrizations):
In \cite{[BiBo]}
Markov property and invariant densities of skew tent maps are studied and 
kneading sequences are also  used.
The study of invariant densities is continued in
\cite{[MaKa]}.
In \cite{[LT]}   the authors classify the dynamics of skew tent maps in terms of two bifurcation parameters. 
In \cite{[BrvS]} equi-topological entropy regions are called isentropes and connectedness of isentropes is verified for real multimodal polynomial interval maps
with only real critical points.


\section{Preliminaries}\label{secprelim}

For $x \in \mathbb{R}$ we  put 
\begin{equation}\label{*labx}
\text{$\ds L_{\alpha,\beta}(x)=\frac{\alpha}{\beta} x$ and $\ds R_{\alpha,\beta}(x)=\frac{\beta}{1-\alpha}(1-x)$.}
\end{equation}


Kneading theory was introduced by J. Milnor and W. Thurston in \cite{[MT]}.
For symbolic itineraries and for the kneading sequences we follow the notation of \cite{[CE]}.

Suppose $T=T_{\aaa,\bbb}$ is fixed for an $(\aaa,\bbb)\in U$ and $x\in [0,1]$.
The itinerary of $x$ is denoted by $\uI(x)=\uI_{\aaa,\bbb}(x)$ and its
extended itinerary by $\uI_{E}(x)=\uI_{E,\aaa,\bbb}(x)\in \{ L,R,C \}^{\Z_{\geq 0}}$. We denote the $n$'th element of $\uI(x)$, by $I_{n}(x)$,
while the $n$'th element of  $\uI_{E}(x)$ is denoted by $I_{E,n}(x)$.
Recall that $I_{E,n}(x)=L$ if $T_{\aaa,\bbb}^n(x)<\aaa$, 
$I_{E,n}(x)=C$ if $T_{\aaa,\bbb}^n(x)=\aaa$ and
$I_{E,n}(x)=R$ if $T_{\aaa,\bbb}^n(x)>\aaa$.
If there is no $C$ in $\uI_{E}(x)$ then $\uI(x)=\uI_{E}(x)$
if there are $C$'s in $\uI_{E}(x)$  then $\uI(x)$ is a finite string which is obtained by stopping at the first $C$ and throwing away
the rest of the infinite string $\uI_{E}(x)$. 
 We denote by $K(\alpha,\beta)=\uI_{\aaa,\bbb}(\bbb)$ the kneading sequence of $T_{\alpha,\beta}$. 

If $\beta$ is fixed we simply write $K(\alpha)$ and $T_\alpha$. If $K(\alpha)=\uM=A_0A_1 \dots$ then $K_n(\alpha)=\uM_{n}=A_n$ for $n=0,1,...$.

We denote by $ { \underline { M } }|n$ the string of the first $n$ entries of $ { \underline { M } }$, that is, $ { \underline { M } }|n=A_0 \dots A_{n-1}$.

A sequence $\uM$ of symbols $L,R,C$ is called admissible if either is an infinite sequence of $L$'s and $R$'s or if $\uM$ is a finite (or empty) sequence of $L$'s and $R$'s, followed by $C$.




Recall that for a finite $\underline{A}=A_{0}...A_{k-1}$ and an arbitrary $\underline{B}=B_{0}B_{1}...$, the $*$ product $\underline{A}* \underline{B}$ is defined as follows:
\begin{itemize}
  \item If $\underline{A}$ is even (i.e. the number of $R$'s in $\uA$ is even),  then $\underline{A} * \underline{B}=\underline{A}B_0 \underline{A} B_1 \underline{A} B_2 \dots$
  \item If $\underline{A}$ is odd, then $\underline{A}*\underline{B}=\underline{A}\breve{B_0}\underline{A}\breve{B_1}\underline{A}\breve{B_2}\dots$, where $\breve{L}=R, \: \breve{R}=L, \: \breve{C}=C$.
\end{itemize}


Following notation of \cite{[MiVi]} we denote by $\mathfrak{M}$ the class of kneading sequences $K(0.5,\beta), \,\beta \in (0.5,1]$, this corresponds to the kneading sequences of functions $F_{\mu,\mu}$ with $1<\mu \leq 2$, or equivalently to kneading sequences 
of $T_{\frac{1}{2},\bbb}$ with $\frac{1}{2}<\bbb \leq 1$.

We denote by $\prec $ the symbolic ordering of itineraries.
Recall that $L\prec C \prec R$. Otherwise one is expected to find the first entry
where two itineraries, say $\underline{A}$ and $\underline{B}$ differ. Suppose this is the $n$'th entry. If there are even many $R$'s in $A_{0}...A_{n-1}$ and $A_{n}\prec B_{n}$ then $\underline{A} \prec \underline{B}$.  If there are odd many $R$'s in $A_{0}...A_{n-1}$ and 
$A_{n}\prec B_{n}$ then $\underline{A} \succ \underline{B}$.

We also remind that $\uM$  is maximal if $\uM\seq \sss ^{n}\uM$
for all $n$, where $\sss$ is the one-sided shift.

By Lemmas II.1.2. and  II.1.3. 
of \cite{[CE]} if $T$ is any unimodal mapping of $[0,1]$
then $\uI(x)\leqc \uI(x')$ implies $x<x'$, and
$x<x'$ implies $\uI(x)\peq \uI(x')$.

 A sequence $\underline{M}$ is in $\mathfrak{M}$ if
\begin{itemize}
\item I. $\underline{M}$ is a maximal admissible sequence,
\item II. $\underline{M}\succ R^{*\infty}$,
\item III. if $\underline{M}=\underline{A}*\underline{B}$ with $\underline{A} \neq \ess, \, \underline{B} \neq C$ then $\underline{A}=R^{*m}$ for some $m$. 
\end{itemize}
It is easy to see that if $(\alpha,\beta) \in U$ then $(\lambda(\alpha,\beta),\mu(\alpha,\beta))=(\frac{\beta}{\alpha},\frac{\beta}{1-\alpha})$ belongs to the region $D=\{(\lambda, \mu):\lambda \geq 1, \mu>1, \frac{1}{\lambda}+\frac{1}{\mu} \geq 1\}$ considered in the paper \cite{[MiVi]}.


By $\MM_{\oo}$ we denote those kneading sequences in $\MM$
which do not contain $\ds C$. These are the infinite sequences.
On the other hand, $\MM_{<\oo}$ will denote the finite kneading sequences.
These are the ones ending with $C$ corresponding to parameter values
when the turning point is periodic.

Suppose $K(\aaa,\bbb)=\uM\in \MM$. We put $\uM^{-}=\lim_{x\to \bbb-}
\uI_{E,\aaa,\bbb}(x)$.
If $\uM\in\MM_{\oo}$ then $\uM^{-}=\uM$.
If $\uM=A_{0}...A_{n-1}C\in\MM_{<\oo}$ with $A_{i}\in \{ L,R \}$
then for even $A_{0}...A_{n-1}$ we have $\uM^{-}=(A_{0}...A_{n-1}L)^{\oo}$,
while for odd  $A_{0}...A_{n-1}$ we have $\uM^{-}=(A_{0}...A_{n-1}R)^{\oo}$.
It is known and not difficult to see that $\uM^{-}$ is a maximal infinite string and
$\uM^{-}\peq \uM$. 
We quickly remind the reader why $\uM^{-}$ is maximal.
Indeed, if $\uM\in\MM_{\oo}$ then $\uM^{-}=\uM$ is clearly maximal since it is a kneading sequence.
 If $\uM\in\MM_{<\oo}$ then proceeding towards a contardiction suppose that $\sss^{k}(\uM^{-})\geqc \uM^{-}$ and $\sss^{k}(\uM^{-})$ first 
differs at its $j$'th entry from the $j$'th entry of $\uM^{-}$, (recall that $\sss$ denotes the shift). 
Then choose $\bbb_{1}<\bbb$ close enough to $\bbb$ such that up to the $(k+j)$'th entry $\uI_{E,\aaa,\bbb}(\bbb_{1})=\uI_{\aaa,\bbb}(\bbb_{1})\in \MM_{\oo}$
equals the corresponding entries of $\uM^{-}$. Let $x^{*}=T_{\aaa,\bbb}^{k}(\bbb_{1}).$
Then $x^{*}\not=\bbb$ and up to the $j$'th entry $\uI\saab (x^{*})$ equals $\sss^{k}(\uM^{-})$.
Choose $\bbb_{2}<\bbb$  such that $x^{*}<\bbb_{2}$ and up to the $j$'th entry $\uI\saab(\bbb_{2})=\uI_{E,\aaa,\bbb}(\bbb_{2})\in \MM_{\oo}$ equals $\uM^{-}$. Then $\uI\saab(\bbb_{2})\geqc \uI\saab(x^{*})$ and this implies $\uM^{-}\geqc \sss^{k}(\uM^{-})$, a contradiction.

It follows from Theorem II.3.8. of \cite{[CE]} that if  $T_{\aaa,\bbb}$ is one of our tent maps  and  $\uA$ is an admissible sequence satisfying
\begin{itemize}
 \item (i) $\uA\seq \uI_{\aaa,\bbb}(0)=L^{\oo}$. (This condition always holds.)
\item (ii) If $K(\aaa,\bbb)$ is infinite then $\sigma^{k }\uA\peq K(\aaa,\bbb)$ for all $k.$
\item  (iii) If $K(\aaa,\bbb)=\uD C$ is finite then $\sigma^{k}\uA\peq \inf ((\uD L )^{\oo}, (\uD R)^\oo)$ for all $k.$
 \end{itemize}
 Then 
there is an $x\in [0,1]$ such that $\uI_{\aaa,\bbb}(x) = \uA.$
\\Using notation introduced in this paper conditions (ii) and (iii) can be replaced by $\sigma^{k }\uA\peq K(\aaa,\bbb)^{-}$ for all $k.$

We denote by $\mathfrak{M}_{R \infty}$ the set of those $\underline{M} \in \mathfrak{M}$ which are of the form $\underline{M}=A_0A_1 \dots A_{n-1}R^{\infty}$.


We recall part of Theorem C from \cite{[MiVi]} with some change in notation .
\begin{theorem}\label{MV}
For each $M \in \mathfrak{M}$ there exists a number $\Gamma(\underline{M})$ and a continuous decreasing function $\varphi_{\underline{M}}:(1,\Gamma(\underline{M})] \rightarrow [1,\infty)$
(with one exception $\underline{M}=RL^{\infty}$ when $\Gamma(\underline{M})=\infty$ and the domain of $\varphi_{\underline{M}}$ is $[1,\infty)$) such that for $(\lambda,\mu) \in D$ we have $K(F_{\lambda,\mu})=\underline{M}$ if and only if $\lambda=\varphi_{\underline{M}}(\mu)$. The function $\Gamma(\underline{M})$ is increasing. The graphs of the functions $\varphi_{\underline{M}}$ fill up the whole set $D$. Moreover,
$$\lim_{\mu \rightarrow 1+0} \varphi_{\underline{M}}(\mu)=\infty \qquad \text{if} \qquad \underline{M} \seq RLR^{\infty},$$
$$\lim_{\mu \rightarrow 1+0} \varphi_{\underline{M}}(\mu)=\Gamma(\underline{J}) \qquad \text{if} \qquad \underline{M} \prec  RLR^{\infty},$$
and $\underline{J}$ is given by $\underline{M}=R*\underline{J}, \,\varphi_{\uM}(\Gamma(\underline{M}))=1$ if $\underline{M} \neq RL^{\infty}$, \\
$\lim_{\mu \rightarrow \infty} \varphi_{\underline{M}}(\mu)=+\infty$ if $\underline{M}=RL^{\infty}.$
\end{theorem}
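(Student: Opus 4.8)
Since this is, up to a change of notation, Theorem~C of \cite{[MiVi]}, the plan is not to reprove it but to recall the structure of the argument and to record the correspondence that lets us use it in the $T_{\aaa,\bbb}$ parametrization. The engine of \cite{[MiVi]} is the two-variable monotonicity of the kneading invariant for the family $F_{\lambda,\mu}$: if $\lambda'\geq\lambda$ and $\mu'\geq\mu$ with at least one inequality strict, then $K(F_{\lambda',\mu'})\seq K(F_{\lambda,\mu})$, with the inequality strict unless both sequences are $R^{*\oo}$. First I would combine this with the (one-sided, at the finitely-periodic parameter values) continuity of $\lambda\mapsto K(F_{\lambda,\mu})$ and the realization theorem for unimodal maps --- the admissibility conditions I--III defining $\MM$ together with Theorem~II.3.8 of \cite{[CE]} --- to conclude that, for fixed $\mu$, the non-decreasing map $\lambda\mapsto K(F_{\lambda,\mu})$ takes exactly the values of $\MM$ realizable at that $\mu$, each of them attained on a single $\lambda$; calling that $\lambda$ the value $\varphi_{\uM}(\mu)$ defines the function. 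Monotonicity of $\varphi_{\uM}$ in $\mu$, and the fact that it decreases, would then follow by applying the same two-variable monotonicity in the complementary direction, and continuity from the joint continuity of $K$ in the parameters.

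Next I would identify $\Gamma(\uM)$ as the supremum of the $\mu$'s at which $\uM$ is realized and check $\varphi_{\uM}(\Gamma(\uM))=1$ by observing that at $\lambda=1$ the left branch of $F_{1,\mu}$ is an isometry, which forces the extreme admissible behaviour; monotonicity of $\Gamma$ is again a consequence of the two-variable monotonicity. For the boundary values as $\mu\to 1+$ I would analyse the degenerate limit $F_{\lambda,1}$: the threshold $RLR^{\oo}$ is precisely the one separating the sequences $\uM\prec RLR^{\oo}$, which survive as $\mu\to 1+$ only through a level-one renormalization $\uM=R*\underline{J}$, from the rest; in the first case the curve $\varphi_{\uM}$ tends to the right endpoint $\Gamma(\underline{J})$ of $\underline{J}$'s curve, and in the second it runs off to $\lambda=\oo$. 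The exceptional case $\uM=RL^{\oo}$ corresponds to the left branch being allowed to become arbitrarily steep, which is why its domain extends all the way to $\lambda\to\oo$. Finally, that the graphs of the $\varphi_{\uM}$ fill $D$ is just the assertion that every $F_{\lambda,\mu}$ with $(\lambda,\mu)\in D$ has a kneading sequence in $\MM$ --- the realization theorem read the other way.

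The one point that genuinely has to be verified before we can invoke this in the present setting is that the substitution of \eqref{intro2a}, $F_{\lambda,\mu}=h^{-1}\circ T_{\aaa,\bbb}\circ h$ with $\lambda=\tfrac{\bbb}{\aaa}$, $\mu=\tfrac{\bbb}{1-\aaa}$, $h(x)=(\bbb-\aaa)x+\aaa$, is an order-preserving conjugacy carrying the turning point of $T_{\aaa,\bbb}$ to that of $F_{\lambda,\mu}$, hence carrying kneading sequences to kneading sequences; combined with the correspondence between $U$ and $D$ recorded earlier this transports the whole statement to the $(\aaa,\bbb)$-picture. In a from-scratch proof the hard part would be the strict monotonicity --- ruling out that a level set $\{K=\uM\}$ is a nondegenerate $\lambda$-interval --- together with the admissibility bookkeeping for the renormalizable, $*$-decomposable (periodic) sequences; both are handled in \cite{[MiVi]} and are not reproduced here.
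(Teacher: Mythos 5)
Your proposal takes the same route as the paper: Theorem~\ref{MV} is stated there purely as a recollection of Theorem~C of \cite{[MiVi]} (with notation adapted via the conjugacy \eqref{intro2a}), and no independent proof is given, so deferring to that reference and recording the order-preserving change of coordinates is exactly what the paper does. Your additional sketch of the internal structure of the Misiurewicz--Visinescu argument is a reasonable summary and does not conflict with anything in the paper.
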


In case we want to translate Theorem \ref{MV} for our parameter range we can state the following:
\begin{theorem}\label{MVab}
For each $\uM \in \mathfrak{M}$ there exist two numbers $\alpha_1(\underline{M}) < \alpha_2(\underline{M})$ and a continuous function $\Psi_{\underline{M}}:(\alpha_1(\underline{M}),\alpha_2(\underline{M})) \rightarrow U$ such that for $(\alpha,\beta) \in U$ we have $K(\alpha,\beta)=\underline{M}$ if and only if $\bbb=\Psi_{\underline{M}}(\alpha)$. 
The graphs of the functions $\Psi_{\underline{M}}$ fill up the whole set U. Moreover, $\lim_{\alpha \rightarrow \alpha_1(\underline{M})+} \Psi_{\underline{M}} (\alpha)=1$ if $\underline{M}\seq  RLR^{\infty}$. If $\underline{M}\prec  RLR^{\infty}$ then the curve $(\alpha, \Psi_{\underline{M}}(\alpha))$ converges to a point on the line segment $\{(\alpha,1-\alpha):0<\alpha<\frac{1}{2}\}$
as $\aaa\to\aaa_{1}(\uM)+$.
 If $\underline{M}=RL^{\infty}$ then $\alpha_1(\underline{M})=0, \, \alpha_2(\underline{M})=1$ and $\Psi_{\underline{M}}(\alpha)=1$ for all $\alpha \in (0,1).$

\end{theorem}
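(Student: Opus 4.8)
The plan is to transport Theorem~\ref{MV} from the $(\mu,\lambda)$-parametrization to the $(\alpha,\beta)$-parametrization in two moves: use the conjugacy \eqref{intro2a} to identify the kneading data of $T_{\alpha,\beta}$ with that of $F_{\lambda,\mu}$, and use the change of variables $\Phi(\alpha,\beta)=(\lambda,\mu)=\bigl(\tfrac{\beta}{\alpha},\tfrac{\beta}{1-\alpha}\bigr)$ to pass between the parameter regions $U$ and $D$.

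First I would check that the conjugacy preserves kneading sequences. The map $h(x)=(\beta-\alpha)x+\alpha$ occurring in \eqref{intro2a} is affine and increasing on $U$ (since $\beta>\alpha$ there), with $h(0)=\alpha$, the turning point of $T_{\alpha,\beta}$, and $h(1)=\beta=T_{\alpha,\beta}(\alpha)$. From $F_{\lambda,\mu}^{\,n}=h^{-1}\circ T_{\alpha,\beta}^{\,n}\circ h$ and the monotonicity of $h$ one gets that the $n$-th extended itinerary entry of $y$ under $F_{\lambda,\mu}$ coincides with the $n$-th extended itinerary entry of $h(y)$ under $T_{\alpha,\beta}$, for every $y$ and $n$; in particular the positions of the symbols $C$ match, so truncating both at the first $C$ gives $\uI^{F_{\lambda,\mu}}(y)=\uI_{\alpha,\beta}(h(y))$. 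Taking $y=1=F_{\lambda,\mu}(0)$ (the critical value) and using $h(1)=\beta$ yields $K(F_{\lambda,\mu})=K(\alpha,\beta)$, which is just the invariance of the kneading sequence under the orientation‑preserving, turning‑point‑preserving conjugacy \eqref{intro2a}.

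Next I would analyse $\Phi$. A direct computation gives $\tfrac1\lambda+\tfrac1\mu=\tfrac1\beta$ and $\alpha=\tfrac{\beta}{\lambda}$, so $\Phi$ is a homeomorphism of $U$ onto $\{(\lambda,\mu):\lambda>1,\ \mu>1,\ \tfrac1\lambda+\tfrac1\mu\ge1\}\subseteq D$, with inverse $(\lambda,\mu)\mapsto\bigl(\tfrac{\mu}{\lambda+\mu},\tfrac{\lambda\mu}{\lambda+\mu}\bigr)$. Fix $\underline{M}\in\mathfrak M$ with $\underline{M}\neq RL^{\infty}$. Along the graph $\lambda=\varphi_{\underline{M}}(\mu)$ the corresponding $\alpha$ equals $\bigl(1+\varphi_{\underline{M}}(\mu)/\mu\bigr)^{-1}$; since $\varphi_{\underline{M}}$ is positive and decreasing while $\mu$ increases, $\varphi_{\underline{M}}(\mu)/\mu$ is strictly decreasing, so $\mu\mapsto\alpha$ is a strictly increasing continuous bijection from the subinterval $\{\mu:\varphi_{\underline{M}}(\mu)>1\}=(1,\Gamma(\underline{M}))$ (using $\varphi_{\underline{M}}(\Gamma(\underline{M}))=1$) onto an open interval which we call $(\alpha_1(\underline{M}),\alpha_2(\underline{M}))$. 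Defining $\Psi_{\underline{M}}(\alpha)$ to be the second coordinate of $\Phi^{-1}$ along that graph — explicitly $\Psi_{\underline{M}}(\alpha)=\mu(\alpha)\bigl(1-\alpha\bigr)$, $\mu(\alpha)$ the inverse map — gives a continuous function, and combining the preceding paragraph with Theorem~\ref{MV} we get, for $(\alpha,\beta)\in U$: $K(\alpha,\beta)=\underline{M}\iff\Phi(\alpha,\beta)$ lies on the graph of $\varphi_{\underline{M}}\iff\beta=\Psi_{\underline{M}}(\alpha)$. Since the kneading sequence is a function of $(\alpha,\beta)$ these graphs are pairwise disjoint, and since every point of $\Phi(U)\subseteq D$ lies on some graph of $\varphi_{\underline{M}}$ by Theorem~\ref{MV}, the graphs of the $\Psi_{\underline{M}}$ fill $U$.

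The boundary behaviour is then obtained by pushing the endpoint statements of Theorem~\ref{MV} through $\Phi^{-1}$, using $\beta=\bigl(\tfrac1\lambda+\tfrac1\mu\bigr)^{-1}$ on the graph. As $\alpha\to\alpha_1(\underline{M})+$, i.e.\ $\mu\to1+$: if $\underline{M}\seq RLR^{\infty}$ then $\varphi_{\underline{M}}(\mu)\to\infty$, hence $\tfrac1\lambda+\tfrac1\mu\to1$ and $\Psi_{\underline{M}}(\alpha)\to1$; if $\underline{M}\prec RLR^{\infty}$ then $\varphi_{\underline{M}}(\mu)\to\Gamma(\underline{J})\in(1,\infty)$ with $\underline{M}=R*\underline{J}$, so $(\alpha,\Psi_{\underline{M}}(\alpha))\to\bigl(\tfrac1{1+\Gamma(\underline{J})},\tfrac{\Gamma(\underline{J})}{1+\Gamma(\underline{J})}\bigr)$, a point with $\beta=1-\alpha$ and $0<\alpha<\tfrac12$. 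The remaining case $\underline{M}=RL^{\infty}$ I would do directly: for $(\alpha,\beta)\in U$ the map $T_{\alpha,\beta}$ is multiplication by $\beta/\alpha>1$ on $[0,\alpha]$, so an orbit cannot stay in $[0,\alpha]$ forever without reaching $0$; hence $K(\alpha,\beta)=RL^{\infty}$ forces $T_{\alpha,\beta}(\beta)=0$, i.e.\ $\beta=1$, and conversely $K(\alpha,1)=RL^{\infty}$ for every $\alpha\in(0,1)$, giving $\alpha_1(RL^{\infty})=0$, $\alpha_2(RL^{\infty})=1$, $\Psi_{RL^{\infty}}\equiv1$. The one step that needs an observation rather than bookkeeping is the monotonicity of $\alpha$ along the graph of $\varphi_{\underline{M}}$: this is what turns a curve in $D$ into the graph of a genuine function of $\alpha$ and simultaneously identifies the endpoints $\alpha_i(\underline{M})$; everything else is the routine transfer along the conjugacy \eqref{intro2a} and the change of variables $\Phi$, together with a little care about the boundary of $U$ and about $RL^{\infty}$.
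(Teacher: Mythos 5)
Your proposal is correct and follows exactly the route the paper intends: the paper states Theorem \ref{MVab} without proof as the translation of Theorem \ref{MV} under the conjugacy \eqref{intro2a} and the coordinate change $(\alpha,\beta)\mapsto(\beta/\alpha,\beta/(1-\alpha))$, and you have simply written out that translation in full, including the one genuinely non-bookkeeping point (strict monotonicity of $\alpha$ along the graph of $\varphi_{\underline{M}}$, which makes $\Psi_{\underline{M}}$ a function of $\alpha$) and the separate direct treatment of $RL^{\infty}$. No gaps.
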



\section{The auxiliary functions $\TTT_{\uM}$}\label{sectheta}


\begin{center}
\begin{figure}
\includegraphics{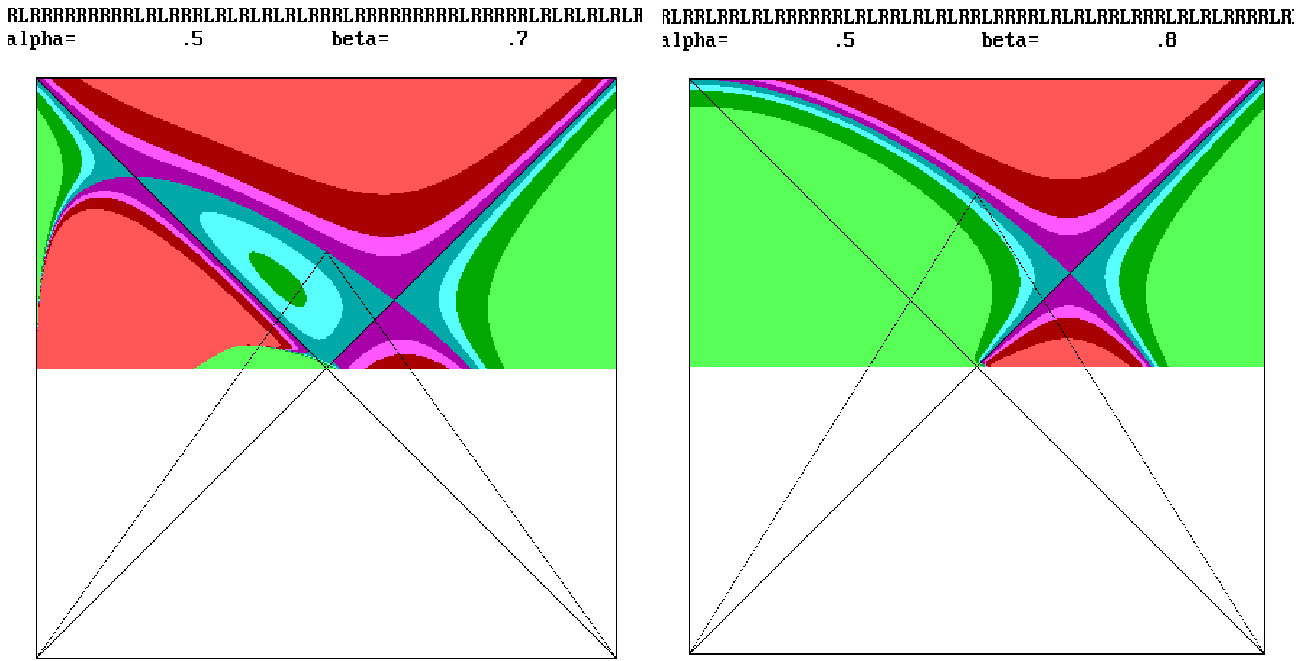}
\caption{Level set structure of $\TTT$ when $\uM=K(.5,.7)$ and when $\uM=K(.5,.8)$} 
\label{theta78*}
\end{figure}
\end{center}

Suppose 
\begin{equation}\label{041801a}
\underline{M}^{-}=R\underbrace{L\dots L}_{m_1}R\underbrace{L\dots L}_{m_2}R\underbrace{L\dots L}_{m_3}R \dots .
\end{equation}

Recall the notation $\omm_{k}=m_1+m_2+\dots+m_k$.

\begin{theorem}\label{thx}
Suppose $\underline{M} \in \mathfrak{M}\backslash\{RL^{\infty}\}$ is given. Then $\Theta_{\underline{M}}:U \rightarrow \mathbb{R}$
 defined in \eqref{02261a}  has the property that for $(\alpha,\beta) \in U$ if $K(\alpha,\beta)=\underline{M}$ then $  \Theta_{\underline{M}}(\alpha,\beta)=0.$
Moreover, $\Theta_{\underline{M}}$ 
can also be written as
\begin{equation}\label{*4b}
 \Theta_{\underline{M}}(\alpha,\beta)=1-\beta+ \sum_{k=1}^{\infty}\left(\frac{\alpha-1}{\beta}\right)^{k}\left(\frac{\alpha}{\beta}\right)^{\omm_{k}}
 \end{equation}
where $m_1={\overline m}_1>0, \, {\overline m}_k \leq {\overline m}_{k+1}\leq {\overline m}_k+{\overline m}_1, \, k=0,1,\dots .$ If $\uM=K(\alpha,\beta)\in \mathfrak{M}_{R,\infty}$ then there exists $n$  such that ${\overline m}_{k+1}={\overline m}_k$ for $k \geq n.$
\end{theorem}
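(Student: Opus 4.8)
The plan is to connect the quantity $\Theta_{\underline{M}}(\alpha,\beta)$ to the orbit of the turning point under $T_{\alpha,\beta}$, and to read off the estimates on the increments $\overline m_k$ directly from the structure of a maximal kneading sequence. First I would set up the dynamical interpretation: write $x_n = T_{\alpha,\beta}^n(\beta)$ for the orbit of the critical value, and use the itinerary $K(\alpha,\beta)^- = \underline M^-$ to express each $x_n$ as a composition of the affine branches $L_{\alpha,\beta}$ and $R_{\alpha,\beta}$ from \eqref{*labx}. Expanding this telescoping composition along the block structure \eqref{041801a} (blocks of the form $R L^{m_j}$) produces exactly the alternating series in \eqref{02261a}, with the factors $\left(\tfrac{1-\alpha}{\beta}\right)^k$ coming from the $k$ occurrences of the $R$-branch and the factors $\left(\tfrac{\alpha}{\beta}\right)^{\overline m_k}$ coming from the accumulated $L$-branches. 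The vanishing $\Theta_{\underline M}(\alpha,\beta)=0$ then records the fact that the orbit is bounded (indeed returns appropriately), i.e. that $x_0 = \beta$ is consistent with being the turning-point image; the sign flip $\left(\tfrac{\alpha-1}{\beta}\right)^k = (-1)^k\left(\tfrac{1-\alpha}{\beta}\right)^k$ is what makes \eqref{*4b} literally the same series as \eqref{02261a}.

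Next I would prove the inequalities on $\overline m_k$. That $m_1 = \overline m_1 > 0$ is immediate: $\underline M^-$ begins with $R$ and, by maximality together with $\underline M \succ R^{*\infty}$, the second symbol cannot be $R$ (else $\sigma\underline M^- $ would begin $R\dots$, and one checks this forces either non-maximality or $\underline M = R^\infty$-type behaviour excluded by hypothesis), hence it is $L$, so $m_1 \ge 1$. The monotonicity $\overline m_k \le \overline m_{k+1}$ is trivial since $\overline m_{k+1} - \overline m_k = m_{k+1} \ge 0$. The substantive bound is $\overline m_{k+1} \le \overline m_k + \overline m_1$, equivalently $m_{k+1} \le m_1$: I would argue by contradiction. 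If some block $R L^{m_{k+1}}$ had $m_{k+1} \ge m_1 + 1$, compare the shifted sequence $\sigma^{j}\underline M^-$, where $j$ is chosen so that $\sigma^j\underline M^-$ starts at the $R$ beginning that long block; then $\sigma^j\underline M^-$ starts $R L^{m_{k+1}} R \dots$ while $\underline M^-$ starts $R L^{m_1} R\dots$ with $m_{k+1} > m_1$. Tracking the parity of $R$'s in the common prefix $R L^{m_1}$ (exactly one $R$, so odd), the symbol comparison at position $m_1+1$ — where $\underline M^-$ has $R$ and $\sigma^j\underline M^-$ has $L$ — gives $\sigma^j\underline M^- \succ \underline M^-$, contradicting maximality of $\underline M^-$ established in the Preliminaries.

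Finally, for the $\mathfrak M_{R,\infty}$ statement: if $\underline M = A_0\dots A_{n-1}R^\infty$ (and $\underline M\in\mathfrak M_{R,\infty}$, so it ends in an infinite tail of $R$'s), then after the last occurrence of $L$ the sequence $\underline M^-$ has no further $L$'s, hence no further blocks $RL^{m_j}$ with $m_j>0$; thus all subsequent $m_k$ vanish and $\overline m_{k+1} = \overline m_k$ for $k$ beyond some index $n$. (One must be slightly careful with the distinction between $\underline M$ and $\underline M^-$ when $\underline M$ is finite, but since here $\underline M$ already ends in $R^\infty$ we have $\underline M^- = \underline M$ if it is infinite, and the finite case is handled by the $\mathfrak M_{<\infty}$ recipe, which appends a single symbol and does not create new $L$-blocks in the relevant tail.) I expect the main obstacle to be the bookkeeping in the first paragraph — carefully justifying that the telescoped affine composition along the block decomposition reproduces \eqref{02261a} term for term, including getting the signs and exponents exactly right and confirming convergence of the series in $U$ (which follows from $\frac{1-\alpha}{\beta} < 1$ and $\frac{\alpha}{\beta} \le 1$ on $U$, with $\overline m_k \to \infty$ unless we are in the $\mathfrak M_{R,\infty}$ case where the series is finite).
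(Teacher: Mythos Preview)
Your approach is essentially the paper's: compute $T_{\alpha,\beta}^{k+\overline m_k}(\beta)$ explicitly along the block decomposition of $\underline M^-$ to obtain (growing multiplier $P_k$) $\times$ (partial sum of $\Theta_{\underline M}$), then use boundedness of the orbit and $|P_k|\to\infty$ to force the partial sums to $0$; the bound $m_j\le m_1$ from maximality of $\underline M^-$ is exactly what the paper invokes in one line. Two small corrections: your clause ``i.e.\ that $x_0=\beta$ is consistent with being the turning-point image'' obscures the actual mechanism (it is purely the boundedness-versus-growing-multiplier argument, not any ``return'' or consistency condition), and in the $\mathfrak M_{R,\infty}$ case the series \eqref{*4b} is not finite --- the $\overline m_k$ merely stabilize, and convergence still comes from $\bigl|\tfrac{\alpha-1}{\beta}\bigr|<1$ on $U$.
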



\begin{proof}
By maximality of $\uM$ and $\uM^{-}$ we have $m_j \leq m_1$ for $j=1,2, \dots$\:. \\
If $T=T_{\alpha,\beta}$ then $T(\alpha)=\beta, T(\beta)=\frac{\beta}{1-\alpha}(1-\beta)$,
$T^{1+m_1}(\beta)=\frac{\beta}{1-\alpha}\left(\frac{\beta}{\alpha}\right)^{m_1}(1-\beta)$,\\
$T^{2+m_1+m_2}(\beta)=\left(\frac{\beta}{\alpha}\right)^{m_2}\frac{\beta}{1-\alpha}(1-T^{1+m_1}(\beta))=
\left(\frac{\beta}{1-\alpha}\right)^2\left(\frac{\beta}{\alpha}\right)^{\omm_2}(-1)\left(1-\beta-\left(\frac{1-\alpha}{\beta}\right)\left(\frac{\alpha}{\beta}\right)^{m_1}\right)=$ $
\left(\frac{\beta}{1-\alpha}\right)^2\left(\frac{\beta}{\alpha}\right)^{\omm_2}(-1)\cdot \Theta_{\uM,1+\omm_1}$,\\
$T^{3+\omm_3}(\beta)=\left(\frac{\beta}{1-\alpha}\right)^{3}\left(\frac{\beta}{\alpha}\right)^{\omm_3}(-1)^2\left(1-\beta-\left(\frac{1-\alpha}{\beta}\right)\left(\frac{\alpha}{\beta}\right)^{\omm_1}+\left(\frac{1-\alpha}{\beta}\right)^2\left(\frac{\alpha}{\beta}\right)^{\omm_2}\right)=\\
\left(\frac{\beta}{1-\alpha}\right)^3\left(\frac{\beta}{1-\alpha}\right)^{\omm_3} \cdot \Theta_{\uM,2+\omm_2}$.\\ \newline
In general, \\ \newline
$T^{k+\omm_k}(\beta)=\left(\frac{\beta}{1-\alpha}\right)^{k}\left(\frac{\beta}{\alpha}\right)^{\omm_{k}}(-1)^{k-1}\bigg(1-\beta-\left(\frac{1-\alpha}{\beta}\right)\left(\frac{\alpha}{\beta}\right)^{\omm_1}+\left(\frac{1-\alpha}{\beta}\right)^2\left(\frac{\alpha}{\beta}\right)^{\omm_2}-+ \dots\\+ (-1)^j\left(\frac{1-\alpha}{\beta}\right)^j\left(\frac{\alpha}{\beta}\right)^{\omm_j}+ \dots+(-1)^{k-1}\left(\frac{1-\alpha}{\beta}\right)^{k-1}\left(\frac{\alpha}{\beta}\right)^{\omm_{k-1}}\bigg)=\\
\left(\frac{\beta}{1-\alpha}\right)^{k}\left(\frac{\beta}{\alpha}\right)^{\omm_k}(-1)^{k-1}\cdot \Theta_{\uM,k-1+\omm_{k-1}}=P_{k-1}(\aaa,\bbb)\cdot \Theta_{\uM,k-1+\omm_{k-1}}$,\\ where $P_{k}(\aaa,\bbb)=\left(\frac{\beta}{1-\alpha}\right)^{k+1}\left(\frac{\beta}{\alpha}\right)^{\omm_{k+1}}(-1)^{k}.$\\

Since $T^{k+\omm_{k}}(\beta)$ is bounded and  $P_{k}(\aaa,\bbb)\rightarrow +\infty$ for $(\aaa,\bbb)\in U$ as $k\to\oo$ we obtain that \\
\begin{equation}\label{041802a}
0=\lim_{k \rightarrow \infty}\Theta_{\uM,k+\omm_k}=\Theta(\alpha,\beta)=\Theta_{\underline{M}}(\alpha,\beta)=1-\beta+ \sum_{k=1}^{\infty}(-1)^{k}\left(\frac{1-\alpha}{\beta}\right)^{k}\left(\frac{\alpha}{\beta}\right)^{\omm_{k}}.
\end{equation}

\end{proof}

\begin{remark} \label{vegesthetam}
If $\uM\in \MM_{<,\oo}$, that is, $\uM=A_{0}...A_{n-1}C$ then
in the proof of the above theorem  we use $\uM^{-}$,
but in this case, as one can see by analyzing the proof of this theorem,
any sequence $A_{0}...A_{n-1}X_{1}A_{0}...A_{n-1}X_{2}A_{0}...A_{n-1}X_{3}...$
with $X_{1}X_{2}...\in \{ L,R \}^{\oo}$ would yield a function $\TTT_{\uM}$
satisfying the conditions of the theorem.
This is due to the fact that $T_{\aaa,\bbb}(\aaa)=L_{\aaa,\bbb}(\aaa)=R_{\aaa,\bbb}(\aaa)=\bbb$.

\end{remark}

\begin{lemma}\label{thprop}
Suppose $\underline{M} \in \mathfrak{M}\backslash\{RL^{\infty}\}, \; \frac{1}{2}<\beta<1$ is fixed. Then $  \Theta(\alpha)=  \Theta_{\underline{M}}(\alpha)=  \Theta_{\underline{M}}(\alpha,\beta)$ is analytic on $(1-\beta,\beta)$ and $\lim_{\alpha \rightarrow \beta-0}   \Theta(\alpha) = 0$.
\end{lemma}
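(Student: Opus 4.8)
The plan is to analyze the series \eqref{02261a} directly as a function of $\aaa$ on the open interval $(1-\bbb,\bbb)$ with $\bbb$ fixed. First I would establish analyticity. For $(\aaa,\bbb)\in U$ we have $0<\aaa<\bbb<1$, so $0<\aaa/\bbb<1$, and also $|1-\aaa|/\bbb<1$ since $\aaa>1-\bbb$ forces $1-\aaa<\bbb$. Each term $(-1)^k\left(\frac{1-\aaa}{\bbb}\right)^k\left(\frac{\aaa}{\bbb}\right)^{\omm_k}$ is a polynomial in $\aaa$ (hence entire), and on a compact subinterval $[1-\bbb+\eee,\bbb-\eee]$ we can bound $|1-\aaa|/\bbb\le q<1$ uniformly, while $(\aaa/\bbb)^{\omm_k}\le 1$; since $\omm_k\ge k$ (as $m_1={\overline m}_1>0$ and the $m_j\ge 0$, indeed $m_j\le m_1$ but more importantly $\omm_k$ is nondecreasing with $\omm_k\ge k$ only if all $m_j\ge 1$ — I should instead just use $\omm_k\ge 0$ and the factor $q^k$), the $k$-th term is bounded in modulus by $q^k$. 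Thus the series converges uniformly on compact subsets of $(1-\bbb,\bbb)$, so $\TTT_{\uM}(\aaa)$ is analytic there by the Weierstrass theorem (a locally uniform limit of analytic functions is analytic). The same bound shows the series extends to a holomorphic function on a complex neighborhood of each such compact interval.

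Next I would compute the limit as $\aaa\to\bbb-$. The cleanest route is to use the representation \eqref{*4b}, $\TTT_{\uM}(\aaa,\bbb)=1-\bbb+\sum_{k=1}^\infty\left(\frac{\aaa-1}{\bbb}\right)^k\left(\frac{\aaa}{\bbb}\right)^{\omm_k}$. As $\aaa\to\bbb-$ we have $\frac{\aaa-1}{\bbb}\to\frac{\bbb-1}{\bbb}=1-\frac1\bbb$, whose absolute value is $\frac{1-\bbb}{\bbb}$; since $\bbb>\frac12$ this is $<1$, so there is $q_0<1$ with $\left|\frac{\aaa-1}{\bbb}\right|\le q_0$ for $\aaa$ near $\bbb$, giving a dominating summable bound $q_0^k$ independent of $\aaa$. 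Hence I may pass the limit inside the sum:
\begin{equation*}
\lim_{\aaa\to\bbb-}\TTT_{\uM}(\aaa,\bbb)=1-\bbb+\sum_{k=1}^\infty\left(\frac{\bbb-1}{\bbb}\right)^k.
\end{equation*}
Summing the geometric series (ratio $\frac{\bbb-1}{\bbb}$, which lies in $(-1,0)$) gives $\sum_{k=1}^\infty\left(\frac{\bbb-1}{\bbb}\right)^k=\frac{(\bbb-1)/\bbb}{1-(\bbb-1)/\bbb}=\frac{\bbb-1}{\bbb-(\bbb-1)}=\bbb-1$, so the total is $(1-\bbb)+(\bbb-1)=0$, as claimed.

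The main obstacle is a subtle one about the convergence factor in \eqref{02261a}: the crude bound $|1-\aaa|/\bbb<1$ does hold on all of $(1-\bbb,\bbb)$ but degenerates to $1$ at the left endpoint $\aaa=1-\bbb$, which is harmless here since we work on compact subintervals, and at the right endpoint it tends to $\frac{1-\bbb}{\bbb}<1$, which is exactly what makes the $\aaa\to\bbb-$ argument clean; so no issue there. The one place to be careful is justifying that the exponents $\omm_k$ do not destroy summability when $\aaa$ is near $1-\bbb$ (where $(\aaa/\bbb)^{\omm_k}$ could be small, which only helps) versus near $\bbb$ (where $(\aaa/\bbb)^{\omm_k}\to 1$, so one genuinely needs the geometric factor $q^k$ from the $\left(\frac{1-\aaa}{\bbb}\right)^k$ part, not from the exponent). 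Since that geometric factor is uniformly $<1$ on compacta and near $\bbb$, both parts of the claim follow. I would also remark that the same computation reproves, via \eqref{*4b} and continuity, the consistency of Theorem \ref{thx} with Theorem \ref{MVab}: $\TTT_{\uM}$ vanishes at the endpoint $\bbb$ of the equi-kneading relation, matching $\lim_{\aaa\to\aaa_1(\uM)+}\Psi_{\uM}(\aaa)$-type boundary behavior.
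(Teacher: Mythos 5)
Your proposal is correct, and its two halves relate to the paper's proof differently. For the limit $\lim_{\aaa\to\bbb-}\TTT_{\uM}(\aaa,\bbb)=0$ you do essentially what the paper does: a dominating bound $q_0^k$ near $\aaa=\bbb$ coming from $\left|\frac{\aaa-1}{\bbb}\right|\le q_0<1$ (valid because $\bbb>\frac12$) and $\left|\frac{\aaa}{\bbb}\right|\le 1$, termwise passage to the limit, and summation of the geometric series with ratio $\frac{\bbb-1}{\bbb}$, giving $(1-\bbb)+(\bbb-1)=0$. For analyticity, however, your route is genuinely different. You extend each polynomial term to complex $\aaa$ and note that the bound $\left|\frac{1-\aaa}{\bbb}\right|^k\left|\frac{\aaa}{\bbb}\right|^{{\overline m}_k}\le q^k$ persists on a thin complex neighborhood of any compact subinterval of $(1-\bbb,\bbb)$ (both $|1-\aaa|/\bbb$ and $|\aaa|/\bbb$ remain below $1$ there for small enough imaginary perturbation), so by the Weierstrass convergence theorem the locally uniform limit is holomorphic and its restriction to the real interval is real-analytic. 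The paper instead stays entirely real: it establishes the termwise derivative bounds \eqref{02266a}--\eqref{02267a}, concludes $\TTT\in\mathcal{C}^{\infty}(1-\bbb,\bbb)$, derives the explicit estimate $|\TTT^{(j)}(\aaa)|\le \frac{j!}{\bbb^{j}}\left(1-\frac{\aaa}{\bbb}\right)^{-j-1}$ of \eqref{02268a}, and shows the Taylor remainder at $\aaa_0$ vanishes for $|\aaa-\aaa_0|<\frac{\bbb-\aaa_0}{2}$. Your argument is shorter and avoids the case split between $\aaa_0\ge\frac12$ and $\aaa_0\le\frac12$ that the paper only sketches; the paper's argument buys explicit quantitative derivative bounds and a concrete lower bound on the radius of convergence, in the same spirit as the termwise-differentiation estimates reused in Section \ref{secfurTTTM}. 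One cosmetic remark: your parenthetical hesitation about whether ${\overline m}_k\ge k$ is unnecessary --- as you yourself conclude, ${\overline m}_k\ge 0$ together with the geometric factor $q^k$ from $\left(\frac{1-\aaa}{\bbb}\right)^k$ is all that is needed, and that is exactly what the paper uses as well.
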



\begin{proof}
First observe that for $r,s \geq0$, we have
\begin{equation}\label{02266a}
\left|\frac{ \partial}{ \partial \alpha} \left(\left( \frac{\alpha-1}{\beta}\right)^r\left(\frac{\alpha}{\beta}\right)^s\right)\right|=\left|\frac{r(\alpha-1)^{r-1}\alpha^s+s(\alpha-1)^r\alpha^{s-1}}{\beta^{r+s}}\right|
\end{equation}
$$\leq \frac{1}{\beta}(r+s) \max \Big \{\left(\frac{\alpha}{\beta}\right)^{r+s-1}, \left(\frac{1-\alpha}{\beta}\right)^{r+s-1}\Big \}
$$
moreover, by \eqref{02266a}
\begin{equation}\label{02266b}
\left|\frac{ \partial^2}{ \partial \alpha^{2}} \left(\left( \frac{\alpha-1}{\beta}\right)^r\left(\frac{\alpha}{\beta}\right)^s\right)\right| 
=
\left |  \frac{r}{\bbb}\dd_{\aaa} \left (\frac{(\aaa-1)^{r-1}\aaa^{s}}{\bbb^{r+s-1}} \right )+
\frac{s}{\bbb}\dd_{\aaa} \left (\frac{(\aaa-1)^{r}\aaa^{s-1}}{\bbb^{r+s-1}} \right )
\right |
\leq
\end{equation}
$$ \frac{1}{\beta^2}(r+s)(r+s-1)\max\Big \{\left(\frac{\alpha}{\beta}\right)^{r+s-2},\left(\frac{1-\alpha}{\beta}\right)^{r+s-2}\Big \}$$
and by induction, for any $j \leq r+s$ we have
\begin{equation}\label{02267a}
\begin{split}
\left|\frac{ \partial^j}{ \partial \alpha^{j}}\left(\frac{\alpha-1}{\beta}\right)^r\left(\frac{\alpha}{\beta}\right)^s\right| \leq & \frac{1}{\beta^j}(r+s)\dots  (r+s-j+1)\cdot \\
& \cdot \max \Big \{\left(\frac{\alpha}{\beta}\right)^{r+s-j},\left(\frac{1-\alpha}{\beta}\right)^{r+s-2}\Big \}.
\end{split}
\end{equation}
Since in the series $\eqref{*4b}$ the exponents $k+{\overline m}_k$ are strictly monotone increasing, that is $(k+1)+{\overline m}_{k+1}>k+{\overline m}_k$, using the above estimates one can easily see, that $  \Theta^{(j)}(\alpha)$ exists and equals the sum of the termwise differentiated series from \eqref{*4b} for $1-\beta < \alpha< \beta$. Therefore, $  \Theta(\alpha) \in \mathcal{C}^{\infty}(1-\beta,\beta).$
Suppose $\frac{1}{2} \leq \alpha <\beta$, which implies $\frac{\aaa}{\bbb}\geq \frac{1-\aaa}{\bbb}.$ Using \eqref{02261a} or \eqref{*4b} and \eqref{02267a} for $j \geq 1$ we obtain that
\begin{equation}\label{02268a}
\begin{split}
|  \Theta^{(j)}(\alpha)|&\leq \frac{1}{\beta^j}\sum_{k'=j}^{\infty}k'(k'-1)\dots(k'-j+1)\left(\frac{\alpha}{\beta}\right)^{k'-j} =\\
&=\frac{1}{\beta^j}j!\left(1-\frac{\alpha}{\beta}\right)^{-j-1}.
\end{split}
\end{equation}
Suppose $\alpha_0 \in (1-\beta,\beta)$. Without limiting generality we suppose $\frac{1}{2} \leq \alpha_0$, which implies $1-\alpha_0 \leq \alpha_0$. We denote by $\capp_n(\alpha)$ the $n$'th order Taylor polynomial of $  \Theta$ at the point $\alpha_0$. Then for $\alpha \in [\frac{1}{2},\beta)$ we have by $\eqref{02268a}$
\begin{equation}
\begin{split}
&|  \Theta(\alpha)-\capp_n(\alpha)| \leq \Big |\frac{  \Theta^{(n+1)}(\xi)}{(n+1)!}(\alpha-\alpha_0)^{n+1}\Big |\leq \\
&\leq \frac{1}{\beta^{n+1}} \cdot \frac{(n+1)!}{(n+1)!} \left(1-\frac{\xi}{\beta}\right)^{-n-2}|\alpha-\alpha_0|^{n+1}= \\
&=\frac{\beta}{\left(\beta-\xi\right)^{n+2}}|\alpha-\alpha_0|^{n+1},
\end{split}
\end{equation}
where $\xi$ is between $\alpha$ and $\alpha_0$. Hence,
 if $|\alpha-\alpha_0|<\frac{\beta-\alpha_0}{2}$ then $\beta-\xi>\frac{\beta-\alpha_0}{2}>|\alpha-\alpha_0|$ which implies that $\capp_n(\alpha) \rightarrow   \Theta(\alpha)$. Similar arguments can be used to the case when $\alpha_0 \leq \aaa \leq \frac{1}{2} $.
One could easily verify that the series in the definition of $  \Theta(\alpha)$ (see \eqref{*4b}) converges uniformly on $[\frac{1}{2},\beta]$ and \\ $\lim_{\alpha \rightarrow \beta-0}
  \Theta(\alpha)=1-\beta+\sum_{k=1}^{\infty}\left(\frac{\beta-1}{\beta}\right)^k=1-\beta+\frac{\beta-1}{\beta} \cdot
\frac{1}{1-\frac{\beta-1}{\beta}}=1-\beta+\frac{\beta-1}{\beta}\frac{\beta}{\beta-(\beta-1)}=0.$
We remark that  on  $[1-\bbb,\frac{1}{2}]$ close to $1-\bbb$ the absolute value of $\frac{1-\aaa}{\bbb}$ is close to one and hence the convergence properties of the series in the definition of $  \Theta(\alpha)$ can be worse close to $1-\bbb$.
Here we have only locally uniform convergence for example when  $\uM\in \MM_{R\oo}$. 
\end{proof}

\begin{proposition}\label{*minmon} Suppose $\uM,\tuM\in \MM\sm \{ RL^{\oo} \}$.
If $\uM\leqc \tuM$,  then $\uM^{-}\leqc \tuM^{-}$.  
\end{proposition}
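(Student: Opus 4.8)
The plan is to reduce the Proposition to a characterization of $\uM^{-}$: namely, that for every $\tuM\in\MM\sm\{RL^{\oo}\}$ the sequence $\tuM^{-}$ is the $\peq$-largest maximal infinite sequence that is $\peq\tuM$; equivalently, every maximal infinite sequence $\underline S$ with $\underline S\peq\tuM$ already satisfies $\underline S\peq\tuM^{-}$. Once this is available the Proposition drops out: the excerpt tells us that $\uM^{-}$ is a maximal infinite sequence with $\uM^{-}\peq\uM$, the hypothesis gives $\uM\peq\tuM$, so $\uM^{-}\peq\tuM$ by transitivity of the ordering $\peq$, and applying the characterization with $\underline S=\uM^{-}$ yields $\uM^{-}\peq\tuM^{-}$.

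So the real content is the characterization, which I would prove by a short case analysis. If $\tuM\in\MM_{\oo}$ then $\tuM^{-}=\tuM$ and there is nothing to prove, so assume $\tuM=\uA C\in\MM_{<\oo}$ with $\uA=A_{0}\dots A_{n-1}\in\{L,R\}^{n}$; recall $\tuM^{-}=\uB^{\oo}$ where $\uB=\uA L$ if $\uA$ is even and $\uB=\uA R$ if $\uA$ is odd, and note that in both cases $\uB$ contains an even number of $R$'s. Let $\underline S$ be maximal and infinite with $\underline S\peq\tuM$; since $\underline S$ is infinite and $\uA C$ finite we in fact have $\underline S\prec\uA C$, and their first disagreement is at some position $j\le n$. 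If $j<n$, then because $\tuM^{-}$ and $\uA C$ share the prefix $\uA$, the sequence $\underline S$ also first disagrees with $\tuM^{-}$ at $j$, with the same common prefix (hence the same parity) and the same pair of entries in $\{L,R\}$, so the defining rule for $\prec$ turns $\underline S\prec\uA C$ into $\underline S\prec\tuM^{-}$. If $j=n$, then $\underline S$ agrees with $\uA$ on its first $n$ entries, and comparing $\underline S_{n}\in\{L,R\}$ with $C$ (using $L\prec C\prec R$ and the parity of $\uA$) forces $\underline S_{n}=L$ when $\uA$ is even and $\underline S_{n}=R$ when $\uA$ is odd; in either case $\underline S$ begins with the block $\uB$.

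It then remains to show that a maximal infinite $\underline S$ beginning with $\uB$ (with $\uB$ even, $p:=|\uB|=n+1$) satisfies $\underline S\peq\uB^{\oo}=\tuM^{-}$. I would argue by contradiction: if $\underline S\succ\uB^{\oo}$, let $m\ge p$ be the first position of disagreement and write $m=qp+r$ with $q\ge 1$, $0\le r<p$. Since $\underline S$ agrees with the $p$-periodic $\uB^{\oo}$ through position $m-1$, it starts with $q$ copies of $\uB$ followed by the length-$r$ prefix of $\uB$, and $\underline S_{m}\neq\uB_{r}$. Then $\sigma^{qp}\underline S$ and $\underline S$ agree on their first $r$ entries (both equal the length-$r$ prefix of $\uB$) and differ at position $r$, where $(\sigma^{qp}\underline S)_{r}=\underline S_{m}\neq\uB_{r}=\underline S_{r}$; write $e$ for the parity of that length-$r$ prefix. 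The length-$m$ prefix of $\underline S$ is $q$ copies of $\uB$ (each of even parity) followed by the same length-$r$ prefix, so it also has parity $e$, and hence $\underline S\succ\uB^{\oo}$ says exactly that $\underline S_{m}\succ\uB_{r}$ when $e$ is even and $\underline S_{m}\prec\uB_{r}$ when $e$ is odd. Feeding this into the comparison of $\sigma^{qp}\underline S$ with $\underline S$ at position $r$ (the common prefix again has parity $e$) gives $\sigma^{qp}\underline S\succ\underline S$ in both cases, contradicting the maximality of $\underline S$. Hence $\underline S\peq\tuM^{-}$, which finishes the characterization and the Proposition.

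I expect the only delicate point to be the case $j=n$, and within it the shift argument — the crucial input there is that the period block $\uB$ of $\tuM^{-}$ carries an even number of $R$'s, which is precisely what keeps the parities aligned when one shifts by a multiple of $p$. Everything else is routine bookkeeping with the definition of $\prec$. One could instead replace the combinatorial step by the dynamical description of $\uM^{-}$ as $\lim_{x\to\bbb-}\uI_{E,\aaa,\bbb}(x)$ together with the realization result from \cite{[CE]} quoted above, but the symbolic route seems shorter and entirely self-contained.
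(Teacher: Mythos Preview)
Your argument is correct and follows essentially the same route as the paper's: both dispose of infinite $\tuM$ trivially, and for finite $\tuM=\uA C$ locate the first disagreement with $\uM^{-}$, in the critical subcase (your $j=n$, the paper's $j=n+1$) exploiting that the period block $\uB$ of $\tuM^{-}$ is even and comparing $\uM^{-}$ with a shift of itself by a multiple of $|\uB|$; your wrapper ``$\tuM^{-}$ is the $\peq$-largest maximal infinite sequence $\peq\tuM$'' is a clean repackaging of exactly this computation. One shared subtlety worth flagging: both arguments literally deliver $\uM^{-}\peq\tuM^{-}$ rather than the strict $\leqc$ in the statement---the paper tacitly assumes a least index $l$ with $B_{l}\neq A^{-}_{l}$ exists, and your characterization only gives $\underline S\peq\tuM^{-}$---but only the weak inequality is actually used downstream in Theorem~\ref{felsozona}.
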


\begin{proof}
If $\tuM$ is infinite then $\tuM^{-}=\tuM$ and we are done since $\uM^{-}\peq \uM.$

Suppose $\tuM=A_{0}...A_{n}A_{n+1}$, with $A_{n+1}=C$ and
$\uM^{-}=B_{0}B_{1}...$.

First suppose that $A_{0}...A_{n}$ is even. Then $\tuM^{-}=(A_{0}...A_{n}L)^{\oo}=A_{0}^{-}...A_{n}^{-}A_{n+1}^{-}...$.
Since $\uM^{-}\leqc\tuM$   there exists a least $j\leq n+1$  such that $B_{j}\not=A_{j}$. If $j\leq n$ then $A_{0}...A_{j-1}$ coincides with
the first $j$ entries of $\tuM^{-}$ and hence $\uM^{-}\leqc \tuM^{-}$. 

If $j=n+1$ then $B_{n+1}=L=A_{n+1}^{-}$.
Suppose that the $l$'th entry is the least where $B_{l}\not=A^{-}_{l}$ and $k(n+2)\leq l<(k+1)(n+2)$.
Then $B_{0}...B_{k(n+2)-1}=A_{0}^{-}...A^{-}_{k(n+2)-1}$ is even and $\sss^{k(n+2)}(\uM^{-})\peq \uM^{-}$ implies $$B_{k(n+2)}...B_{(k+1)(n+2)-1}\leqc A_{0}^{-}...A^{-}_{n+1}
=A_{k(n+2)}...A_{(k+1)(n+2)-1}=B_{0}...B_{n+1}$$ which implies $\uM^{-}\leqc \tuM^{-}.$

If $A_{0}...A_{n}$ is odd then $\tuM^{-}=(A_{0}...A_{n}R)^{\oo}=A_{0}^{-}...A_{n}^{-}A_{n+1}^{-}...$ and $A_{0}^{-}...A_{n}^{-}A_{n+1}^{-}$ will be again an even sequence and we can argue as earlier,
details are left to the reader.
\end{proof}

\begin{theorem}\label{felsozona}
Suppose $ { \underline { M } } \in \mathfrak{M}\sm\{RL^{\infty}\}$ is given, $(\alpha,\beta)\in U$ and $K(\alpha,\beta)=\widetilde{ { \underline { M } }} \succ  { \underline { M } }$. Then $  \Theta_{ { \underline { M } }}(\alpha,\beta)\neq 0.$
\end{theorem}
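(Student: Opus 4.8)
The plan is to mimic the telescoping/dynamical identity from the proof of Theorem~\ref{thx}, but now run it with the true kneading sequence $\tuM = K(\aaa,\bbb)$ rather than with $\uM$, and then compare. Concretely, let $T=T_{\aaa,\bbb}$ and write $\tuM^{-} = R L^{m'_1} R L^{m'_2} R \cdots$ with partial sums $\overline m'_k = m'_1+\cdots+m'_k$. As in Theorem~\ref{thx}, one has the exact relation
\[
 T^{k+\overline m'_k}(\bbb) \;=\; \widetilde P_{k-1}(\aaa,\bbb)\cdot \TTT_{\tuM,\,k-1+\overline m'_{k-1}}(\aaa,\bbb),
\]
where $\widetilde P_{k-1}$ has the same product form as $P_{k-1}$ but with the $\overline m'$ exponents, so $|\widetilde P_{k-1}|\to\infty$ on $U$; hence $\TTT_{\tuM}(\aaa,\bbb)=0$. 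The point is that the partial sum $\TTT_{\tuM,\,n}(\aaa,\bbb)$ agrees with $\TTT_{\uM,\,n}(\aaa,\bbb)$ for all $n$ strictly smaller than the first index at which the sequences $\uM^{-}$ and $\tuM^{-}$ differ (since the coefficients $(-1)^k((1-\aaa)/\bbb)^k(\aaa/\bbb)^{\overline m_k}$ depend only on the positions of the $R$'s, i.e.\ on the prefix of $\uM^{-}$). So the whole argument hinges on controlling where $\uM^{-}$ and $\tuM^{-}$ first differ and, crucially, in which direction.

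First I would invoke Proposition~\ref{*minmon}: from $\uM \prec \tuM$ we get $\uM^{-} \preceq \tuM^{-}$, and in fact $\uM^{-} \prec \tuM^{-}$ strictly (equality $\uM^- = \tuM^-$ together with $\uM\prec\tuM$ is impossible for admissible sequences in $\MM$ — one sequence would have to be a finite truncation at $C$ of the other with a disallowed ordering; this needs a short separate check). Let $j$ be the first index where $\uM^{-}$ and $\tuM^{-}$ differ, and let $\varepsilon = \pm 1$ record whether the prefix $\uM^{-}|j$ has an even or odd number of $R$'s. The shift-maximality of $\uM^{-}$ forces its differing entry to be the smaller one in the appropriately-signed ordering, so writing $n_0 = (j-1) + \overline m_{j-1}$ (the largest "good" partial-sum index), I would show that the tail
\[
 \TTT_{\uM}(\aaa,\bbb) \;=\; \TTT_{\uM,\,n_0}(\aaa,\bbb) \;+\; \sum_{k \ge j}(-1)^k\Big(\tfrac{1-\aaa}{\bbb}\Big)^k\Big(\tfrac{\aaa}{\bbb}\Big)^{\overline m_k}
\]
has the property that $\TTT_{\uM,\,n_0}(\aaa,\bbb) = \TTT_{\tuM,\,n_0}(\aaa,\bbb)$, and then express $\TTT_{\tuM,\,n_0}(\aaa,\bbb)$ via the dynamical identity above as $T^{\text{(something)}}(\bbb)$ divided by an explicit positive-or-negative product $\widetilde P$. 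Because $T^{m}(\bbb) \in [0,\bbb] \subset [0,1]$ always, this partial sum is trapped in a half-line determined by the sign of $\widetilde P$; separately, the discrepancy tail $\sum_{k\ge j}$, whose leading term dominates by the now-familiar geometric-decay estimate (the exponents $k+\overline m_k$ strictly increase, and $(1-\aaa)/\bbb, \aaa/\bbb < 1$ on the relevant part of $U$), has a definite sign $= \varepsilon\cdot(-1)^j$. Matching these two sign conditions and checking they are \emph{incompatible} with $\TTT_{\uM}(\aaa,\bbb)=0$ gives the conclusion.

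The main obstacle I expect is the sign bookkeeping in the previous paragraph: one must verify that the direction in which $\tuM^{-}$ exceeds $\uM^{-}$ (controlled by Proposition~\ref{*minmon} and the parity of the common prefix) lines up consistently with the sign of the factor $\widetilde P_{k-1}$ and with the sign of the leading discrepancy term, uniformly over $(\aaa,\bbb)\in U$ — and, annoyingly, the geometric-tail estimate is only \emph{locally} uniform near the edge $\aaa = 1-\bbb$ (as flagged after Lemma~\ref{thprop}), so near that edge the leading-term-dominates claim needs a slightly more careful comparison rather than a crude bound. A secondary subtlety is the degenerate case where $\uM$ is finite ($\uM \in \MM_{<\oo}$): there one should use Remark~\ref{vegesthetam}, noting that $\TTT_{\uM}$ computed from $\uM^{-}$ is the relevant object and that the identity $T_{\aaa,\bbb}(\aaa)=\bbb$ makes the truncation harmless. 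Apart from these, the argument is essentially the same telescoping computation as Theorem~\ref{thx} read "one difference-index at a time."
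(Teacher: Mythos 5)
Your proposal takes a genuinely different route from the paper, and it has a real gap at exactly the step you flag as ``the main obstacle'': the sign/domination analysis of the discrepancy tail cannot be closed with the estimates you invoke, and the difficulty is not confined to a neighbourhood of the edge $\aaa=1-\bbb$. Write $a_k=\left(\frac{1-\aaa}{\bbb}\right)^k\left(\frac{\aaa}{\bbb}\right)^{\overline{m}_k}$ for the moduli of the terms of $\TTT_{\uM}$ and $a'_k$ for those of $\TTT_{\tuM}$, and let $j$ be the index of the first differing term. Each tail $\sum_{k\ge j}(-1)^k a_k$, $\sum_{k\ge j}(-1)^k a'_k$ is alternating with strictly decreasing moduli, so the best two-sided bounds available squeeze each between its first term and first-minus-second. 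In the case of an even common prefix one has $\overline{m}'_j<\overline{m}_j$, hence $a'_j>a_j$, and to give $\mathrm{tail}_{\uM}-\mathrm{tail}_{\tuM}$ a definite (negative) sign you would need $a_j<a'_j-a'_{j+1}$, i.e.
\[
\left(\frac{\aaa}{\bbb}\right)^{\overline{m}_j-\overline{m}'_j}<1-\frac{1-\aaa}{\bbb}\left(\frac{\aaa}{\bbb}\right)^{m'_{j+1}} .
\]
Already for $\overline{m}_j-\overline{m}'_j=1$ and $m'_{j+1}=0$ this reads $\frac{\aaa}{\bbb}<\frac{\aaa+\bbb-1}{\bbb}$, i.e.\ $\bbb>1$, which fails for \emph{every} $(\aaa,\bbb)\in U$; nearby configurations fail as well. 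So ``the leading term dominates'' is simply not available here, and worse, after the first disagreement the two tails are governed by two unrelated futures, so there is no single leading discrepancy term to isolate without combinatorial control of both sequences far beyond index $j$ — control your outline does not supply.

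The paper avoids all of this with a dynamical idea that is absent from your sketch. From $\sigma^n\uM^-\prec\uM^-\prec\tuM^-\preceq\tuM$ (maximality of $\uM^-$ together with Proposition \ref{*minmon}) and the realization theorem (Theorem II.3.8 of \cite{[CE]}) one obtains a point $\tilde x\neq\bbb$ with $\uI_{\aaa,\bbb}(\tilde x)=\uM^-$. The formal composition $T^{\uM^-|n}_{\aaa,\bbb}$ dictated by the symbols of $\uM^-$ is affine with expansion factor exactly $|P_k(\aaa,\bbb)|$; it sends $\tilde x$ to $T^n_{\aaa,\bbb}(\tilde x)\in[0,1]$ and sends $\bbb$ to $P_k(\aaa,\bbb)\cdot\TTT_{\uM,k+\overline{m}_k}(\aaa,\bbb)$. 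Comparing the two images gives $|\TTT_{\uM,k+\overline{m}_k}(\aaa,\bbb)|\ge|\tilde x-\bbb|-|P_k(\aaa,\bbb)|^{-1}$, and letting $k\to\infty$ yields $|\TTT_{\uM}(\aaa,\bbb)|\ge|\tilde x-\bbb|>0$ with no sign bookkeeping whatsoever. The quantity separating $\TTT_{\uM}$ from zero is the distance $|\tilde x-\bbb|$, not a leading series term; to complete your approach you would essentially have to rediscover this mechanism.
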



\begin{proof}
By maximality of $\uM$ and $ { \underline { M }^{-} }$ 
and by Proposition \ref{*minmon}
we have $\sigma^n  { \underline { M }^{-} }  \prec  { \underline { M }^{-} } \prec \widetilde{ { \underline { M }} }^{-}\peq \widetilde{\uM}$ for all $n \in \mathbb{N}$. 
This implies that there is $\tilde{x} \in [0,1]$, $\tilde{x} \neq \beta$ such that $\underline{I}_{\alpha,\beta}(\tilde{x})=\underline{I}(\tilde{X})= { \underline { M }^{-} }$.

  If $ { \underline { M }^{-} }=A_0A_1 \dots$ then $ { \underline { M }^{-} _{n}}=A_n \in \{R,L\}$, the $n$'th entry of the infinite string. 
  Recalling notation \eqref{*labx}
  this way $( { \underline { M }^{-} _n})_{\alpha,\beta}(x)$ is well defined and equals $L_{\alpha,\beta}(x)$ if $ { \underline { M }^{-} _n}=L$ and $R_{\alpha,\beta}(x)$ if $ { \underline { M }^{-} _n}=R$. \\

Then we can define $$T^{ { \underline { M }^{-} }|n}_{\alpha,\beta}(x)=( { \underline { M }^{-} _{n-1}})_{\alpha,\beta} \circ ( { \underline { M }^{-} _{n-2}})_{\alpha,\beta} \circ \dots \circ ( { \underline { M }^{-} _0})_{\alpha,\beta}(x).$$ 
For $\tilde{x}$ we have $T^{ { \underline { M }^{-} }|n}_{\alpha,\beta}(\tilde{x})=T^n_{\alpha,\beta}(\tilde{x})$, but
it is not necessarily true that
 $T^{ { \underline { M }^{-} }|n}_{\alpha,\beta}(\beta) = T^n_{\alpha,\beta}(\beta)$. We denote by $l( { \underline { M }^{-} }|n)$ the number of $L$'s and by $r( { \underline { M }^{-} }|n)$ the number of $R$'s in the string $ { \underline { M }^{-} } |n$. Since the functions $L_{\alpha,\beta}$ and $R_{\alpha,\beta}$ are linear we have
\begin{equation}\label{052103a}
|T^{ { \underline { M }^{-} }|n}_{\alpha,\beta}(\tilde{x})-T^{ { \underline { M }^{-} }|n}_{\alpha,\beta}(\beta)|= \left(\frac{\beta}{1-\alpha}\right)^{r( { \underline { M }^{-} }|n)}\left(\frac{\beta}{\alpha}\right)^{l( { \underline { M }^{-} }|n)}|\tilde{x}-\beta|.
\end{equation}
Now, suppose that $n=k+1+\omm_{k+1}$. Then
by the argument used in the proof of Theorem \ref{thx}
\begin{equation}\label{052104a}
T_{\alpha,\beta}^{ { \underline { M }^{-} }|n}(\beta)=P_k(\alpha,\beta)\cdot   \Theta_{ { \underline { M }^{-} },k+\omm_{k}}(\alpha,\beta), \text{ with }
\end{equation}
\begin{equation}\label{052104b}
\begin{split}
P_k(\alpha,\beta)&=\left(\frac{\beta}{1-\alpha}\right)^{k+1}\left(\frac{\beta}{\alpha}\right)^{\omm_{k+1}}(-1)^{k}=\\
&(-1)^{r( { \underline { M }^{-} }|n)+1} \cdot \left(\frac{\beta}{1-\alpha}\right)^{r( { \underline { M }^{-} }|n)}\cdot\left(\frac{\beta}{\alpha}\right)^{l( { \underline { M }^{-} }|n)} .
\end{split}
\end{equation}
Since $T^{ { \underline { M }^{-} }|n}_{\alpha,\beta}(\tilde{x}) \in [0,1]$ from \eqref{052103a} it follows that
\begin{equation}\label{052104c}
|T^{ { \underline { M }^{-} }|n}_{\alpha,\beta}(\beta)| \geq |P_k(\alpha,\beta)| \cdot |\tilde{x}-\beta|-1.
\end{equation}
Dividing \eqref{052104a} by $P_k(\alpha,\beta)$ and using \eqref{052104c} we obtain that
$$|  \Theta_{ { \underline { M } },k+\omm_{k}}(\alpha,\beta)| \geq |\tilde{x}-\beta|-\frac{1}{|P_k(\alpha,\beta)|}.$$

As $k$ goes to infinity we obtain that
$$|  \Theta_{ { \underline { M } }}(\alpha,\beta)| \geq |\tilde{x}-\beta|>0.$$
\end{proof}


This shows that  in the region $U$ above the curve $(\alpha,\Psi_{ { \underline { M } }}(\alpha)), \; \alpha \in (\alpha_1( { \underline { M } }), \alpha_2( { \underline { M } }))$ the auxiliary function $  \Theta_{ { \underline { M } }}$ is non-vanishing. For a while we tried to prove that a similar result is true for points of $U$ under the curve $(\alpha,\Psi_{ { \underline { M } }}(\alpha))$. The reason for this conjecture was that on all computer images we tried, for example $\alpha=0.5, \: \beta=0.7$, or $\alpha=0.5, \;\beta=0.8$  (see Figure \ref{theta78*}) this seemed to be the case (sometimes for some parameter values a small zone of the level zero set of $\TTT_{\uM}$ showed up close to $(1/2,1/2)$) but in this zone $\bbb/\aaa$
and $\bbb/(1-\aaa)$ are both close to one and hence we have slow convergence of the series in the definition of $\TTT$).
The next theorem shows that the level zero set of $\TTT_{\uM}(\aaa,\bbb)$ in $U$ does not necessarily coincides with the equi-kneading curve corresponding to $\uM$.


\begin{theorem}\label{thex}
There exists $ { \underline { M } } \in \mathfrak{M}_{\oo}\backslash \{RL^{\infty}\}$ such that for some $(\alpha,\beta) \in U$ we have $  \Theta_{ { \underline { M } }}(\alpha,\beta)=0$, but $K(\alpha,\beta)=\widetilde{ { \underline { M } }} \neq  { \underline { M } }$. By Theorem \ref{felsozona} we have $K(\alpha,\beta)  \prec  { \underline { M } }$.
\end{theorem}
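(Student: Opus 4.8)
The plan is to exhibit an explicit kneading sequence $\uM$ and an explicit point $(\aaa,\bbb)\in U$ witnessing the claim, rather than argue abstractly. By Theorem~\ref{felsozona} we already know that any spurious zero of $\TTT_{\uM}$ must lie strictly below the curve $\PPP_{\uM}$, i.e. satisfy $K(\aaa,\bbb)\prec\uM$, so the whole content is to show the set $\{\TTT_{\uM}=0\}$ is genuinely larger than the equi-kneading curve for at least one $\uM$. The natural place to look is near the diagonal around $(1/2,1/2)$, since the introduction and the discussion after Theorem~\ref{felsozona} point out that for parameters like $\aaa=0.5,\ \bbb=0.7$ or $0.8$ a small extra zero zone of $\TTT_{\uM}$ appears close to $(1/2,1/2)$; this is exactly the region where $\bbb/\aaa$ and $\bbb/(1-\aaa)$ are both near $1$, the series in \eqref{02261a} converges slowly, and the relationship between $\TTT_{\uM}$ and the actual itinerary of $\bbb$ degrades.

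Concretely, I would proceed as follows. First, fix $\bbb$ (say $\bbb=0.7$ or another value where the computer pictures in Figure~\ref{theta78*} show the extra zone) and let $\uM=K(0.5,\bbb)\in\MM_\oo\setminus\{RL^\oo\}$; one must check $\uM\neq RL^\oo$, which holds for $\bbb$ not too close to $1$. Along the diagonal $\TTT_{\uM}(\bbb,\bbb)=0$ by construction (Theorem~\ref{thx}), and by Lemma~\ref{thprop} $\TTT_{\uM}(\cdot,\bbb)$ is analytic on $(1-\bbb,\bbb)$ with $\lim_{\aaa\to\bbb-}\TTT_{\uM}(\aaa,\bbb)=0$. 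Next, I would analyze the sign of $\aaa\mapsto\TTT_{\uM}(\aaa,\bbb)$ near $\aaa=1/2$: using the series \eqref{*4b} one evaluates or estimates $\TTT_{\uM}(1/2,\bbb)$ and the derivative $\TTT_{\uM}'(1/2,\bbb)$, showing that $\TTT_{\uM}$ takes the value $0$ at some $\aaa^\ast$ with $1-\bbb<\aaa^\ast<1/2$ (or in a neighborhood of $1/2$) that is NOT on the equi-kneading curve $\aaa\mapsto\PPP_{\uM}(\aaa)$ — the latter passing through $(1/2,\bbb)$ and, by Theorem~\ref{MVab}, being a bona fide function of $\aaa$, so its only point with $\bbb$-coordinate equal to our fixed $\bbb$ is $\aaa=1/2$. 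Hence $(\aaa^\ast,\bbb)$ with $\aaa^\ast\neq 1/2$ has $\TTT_{\uM}(\aaa^\ast,\bbb)=0$ but $K(\aaa^\ast,\bbb)=\widetilde{\uM}\neq\uM$, and Theorem~\ref{felsozona} forces $\widetilde{\uM}\prec\uM$.

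There are two ways to nail down the existence of the extra zero $\aaa^\ast$ rigorously, and I would try the cleaner one first: pick $\uM$ small enough (few terms), e.g. coming from a periodic turning point so that $\uM^-$ is eventually periodic and the sum in \eqref{02261a} becomes a rational function of $\aaa$ (for fixed $\bbb$) whose zeros can be located exactly. For such an $\uM$ one can solve $\TTT_{\uM}(\aaa,\bbb)=0$ in closed form, identify the "extra" root, and separately compute $K(\aaa^\ast,\bbb)$ by iterating $T_{\aaa^\ast,\bbb}$ a few steps to confirm it differs from $\uM$ (Remark~\ref{vegesthetam} is relevant here, since for finite $\uM$ the function $\TTT_{\uM}$ is built from $\uM^-$). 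Alternatively, for an infinite $\uM$ one uses an intermediate-value / sign-change argument: show $\TTT_{\uM}$ changes sign somewhere in $(1-\bbb,1/2)$ while being $0$ at $\bbb$, which produces an interior zero whose $\bbb$-value cannot come from the monotone graph $\PPP_{\uM}$.

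The main obstacle is the second part: certifying that at the spurious zero $(\aaa^\ast,\bbb)$ the true kneading sequence is different from $\uM$. Proving $\TTT_{\uM}=0$ there is just power-series estimation, but separating $K(\aaa^\ast,\bbb)$ from $\uM$ requires controlling finitely many iterates of $T_{\aaa^\ast,\bbb}$ with enough precision — and the whole difficulty of the phenomenon is precisely that these parameters live where $\bbb/\aaa^\ast$ and $\bbb/(1-\aaa^\ast)$ are close to $1$, so many iterates are needed before the itineraries diverge and the numerics are delicate. Choosing $\uM$ with a short period (so that both $\TTT_{\uM}$ and the itinerary comparison reduce to explicit algebra) is what makes this tractable, which is why I would hunt for the smallest example — likely something in $\MM_{<\oo}$ with a short period — rather than work with a generic $\bbb$ like $0.7$.
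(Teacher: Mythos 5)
Your overall strategy (pick an explicit $\uM$, do a sign analysis of $\TTT_{\uM}$, and let Theorem \ref{felsozona} supply the final ordering claim) is the same as the paper's, but the step that certifies the extra zero is \emph{spurious} has a genuine gap. You fix $\bbb$ and look for a second zero $\aaa^{\ast}\neq 1/2$ of $\aaa\mapsto\TTT_{\uM}(\aaa,\bbb)$, arguing that since $\PPP_{\uM}$ is a function of $\aaa$, ``its only point with $\bbb$-coordinate equal to our fixed $\bbb$ is $\aaa=1/2$.'' That does not follow: Theorem \ref{MVab} says that each \emph{vertical} line meets the set $\{K=\uM\}$ at most once; it says nothing about horizontal lines. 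Ruling out a second intersection of the graph of $\PPP_{\uM}$ with a horizontal line is equivalent to injectivity (monotonicity) of $\PPP_{\uM}$, which is exactly the result the authors defer to \cite{[SNR]} and \cite{[SOR]} as ``far from trivial''; your second variant (``cannot come from the monotone graph $\PPP_{\uM}$'') leans on the same unproven monotonicity, so the argument is circular at this point. Your fallback --- directly computing $K(\aaa^{\ast},\bbb)$ near the diagonal --- is precisely the delicate computation you yourself flag as the main obstacle and never carry out. The paper's resolution is to work along a vertical line instead: it fixes $\aaa_{0}=0.4875$, writes $\TTT_{\uM}$ in closed form, and exhibits with controlled numerical error \emph{two} sign changes of $t\mapsto\TTT_{\uM}(\aaa_{0},t)$, hence two roots on one vertical line; since $K(\aaa_{0},t)=\uM$ can hold for at most one $t$ (this \emph{is} what Theorem \ref{MVab}, equivalently the monotonicity of $g(\bbb)=h(\aaa_{0},\bbb)$, provides), at least one root is spurious, and no itinerary at the spurious point ever needs to be computed.

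A secondary problem: you propose hunting for the example among finite kneading sequences in $\MM_{<\oo}$ with a short period, but the theorem asserts $\uM\in\MM_{\oo}\setminus\{RL^{\oo}\}$, so a periodic-turning-point example would not prove the stated result. The paper obtains computability while staying in $\MM_{\oo}$ by taking $\uM\in\MM_{R\oo}$ (eventually $R^{\oo}$, with a carefully designed prefix given by $m_{1}=6$, $m_{2k}=5$, $m_{2k+1}=0$ for $k=1,\dots,23$ and $m_{k}=0$ afterwards), for which the series \eqref{*4b} sums to an explicit rational expression in $\aaa$ and $\bbb$.
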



\begin{proof}
We use notation from the proof of Theorem \ref{felsozona}. Suppose that 
 $$\underline{M}=\underline{M}^{-}=R\underbrace{L\dots L}_{m_1}R\underbrace{L\dots L}_{m_2}R\underbrace{L\dots L}_{m_3}R \dots$$
is defined by
$m_1=6,$  $m_{2k}=5$, $m_{2k+1}=0$ for $k=1,...,23$ and $m_{k}=0$ for $k \geq 48$. This uniquely defines $ { \underline { M } }$ and it is also clear that $ { \underline { M } }$ ends with an infinite string of $R$'s. Moreover, one can easily see that $ { \underline { M } }$ is a maximal sequence and $ { \underline { M } } \neq RL^{\infty}$. In order to verify that $ { \underline { M } } \in \mathfrak{M}$ we show that one cannot write $ { \underline { M } }$ in the form $\underline{A} * \underline{B}$. Since $\underline{A} * \underline{B}$ contains infinitely often the finite string $\underline{A}$ and $ { \underline { M } }$ ends with $R^{\infty}$ the string $\underline{A}$ can contain only $R's$. The blocks $\underline{A}$ are separated by one symbol and $ { \underline { M } }$ is of the form $\underline{A}\star_1\underline{A}\star_2\underline{A}\star_3\underline{A}\star\dots$ where the symbols $\star_k$ are determined by $\underline{B}$. Since $\lambda_1=6>0$ we should have $\underline{A}=R$ and then $ { \underline { M } }$ should be of the form $\underline{R}\star_1\underline{R}\star_2\underline{R}\star_3\dots$ contradicting the fact that the third symbol in $ { \underline { M } }$ is an $L$, not an $R$. 

Therefore, $ { \underline { M } } \in \mathfrak{M}_{R,\infty} \subset \mathfrak{M}_{\oo}\backslash\{RL^{\infty}\}$. After some computer 
experimentation we chose $\alpha_0=\alpha_1=\alpha_2=0.4875, \: \beta_0=0.535, \: \beta_1=0.7$ and $\beta_2=0.995.$ Out of these values the choice of $(\alpha_0,\beta_0)$ was difficult. One can write $  \Theta_{ { \underline { M } }}$ in closed form:
$$  \Theta_{ { \underline { M } }}(\alpha,\beta)=1-\beta+\left(\frac{\alpha-1}{\beta}\right)\left(\frac{\alpha}{\beta}\right)^6+
$$ $$\sum_{k=1}^{23}
\left(\left(\frac{\alpha-1}{\beta}\right)^{2k}\left(\frac{\alpha}{\beta}\right)^{6+5k}+\left(\frac{\alpha-1}{\beta}\right)^{2k+1}\left(\frac{\alpha}{\beta}\right)^{6+5k}\right)+\sum_{k=48}^{\infty} \left(\frac{\alpha-1}{\beta}\right)^k\left(\frac{\alpha}{\beta}\right)^{6+5 \cdot 23}=$$ $$1-\beta+\frac{\alpha-1}{\beta}\left(\frac{\alpha}{\beta}\right)^6+\sum_{k=1}^{23}\left(\left(\frac{\alpha-1}{\beta}\right)^{2k}\left(\frac{ \alpha}{\beta}\right)^{6+5k}\cdot
 \left(\frac{\alpha+\beta-1}{\beta}\right)\right)+
 $$ $$
 \left(\frac{\alpha}{\beta}\right)^{121} \cdot \left(\frac{\alpha-1}{\beta}\right)^{48}\frac{\beta}{1+\beta-\alpha}.$$
With the above values
$$  \Theta_{ { \underline { M } }}(\alpha_0,\beta_0)\app-0.0505893,$$
$$  \Theta_{ { \underline { M } }}(\alpha_1,\beta_1)\app0.2194096,$$
$$  \Theta_{ { \underline { M } }}(\alpha_2,\beta_2)\app-0.00207430.$$
The error of our double precision calculation was
 less than $0.0001$ which implies that the continuous function $h(t)=  \Theta_{ { \underline { M } }}(\alpha_0,t)$ has
at least two roots between $\beta_0$ and $\beta_2$. On the other hand, by the strict monotonicity of $h(\alpha_0,t)$ it is impossible that for both of these roots $K(\alpha_0,t)= { \underline { M } }$. Hence, there is $(\alpha,\beta)$ with $\aaa=\alpha_0$
and $\beta\in (\bbb_{0},\bbb_{1})$ satisfying the claim of the theorem.
\end{proof}
\begin{center}
\begin{figure}
\includegraphics{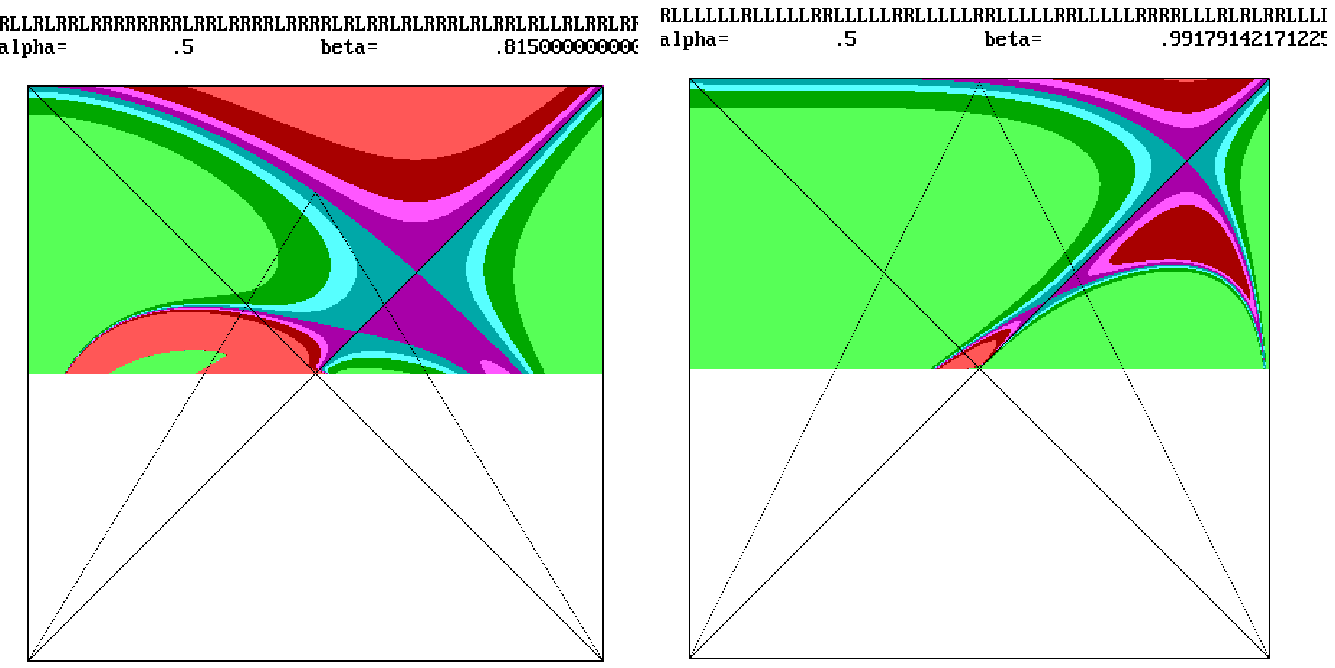}
\caption{Level set structure of $\TTT$ when $\uM=K(.5,.815)$ and when $\uM=K(.5,.8)$} 
\label{theta34*}
\end{figure}
\end{center}
The kneading sequence $ { \underline { M } }$ in Theorem \ref{thex} was defined by some theoretical considerations. To obtain a maximal sequence $m_1=6$
had to be larger than the other $m_j$'s. The key negative term $\left(\frac{\alpha_0-1}{\beta_0}\right)\cdot\left(\frac{\alpha_0}{\beta_0}\right)^6\app -.5483592$ had to be of larger absolute value than $1-\beta_0=0.465$, and the $23$ positive terms of the finite sum were made small by the fact that this sum can also be written in the form $$\left(\frac{\alpha_0-1}{\beta_0}\right)^2\left(\frac{\alpha_0}{\beta_0}\right)^{11} \cdot \left(\frac{\alpha_0+\beta_0-1}{\beta_0}\right) \cdot \sum_{k=0}^{22} \left(\frac{\alpha_0}{\beta_0}\right)^{5k}$$ and
$$\left(\frac{\alpha_0-1}{\beta_0}\right)^2\left(\frac{\alpha_0}{\beta_0}\right)^{11}\left(\frac{\alpha_0+\beta_0-1}{\beta_0}\right)\approx 0.0138784$$
with $\frac{\alpha_0+\beta_0-1}{\beta_0} \approx 0.0420561$. The reciprocal of this 
last number is quite large, but by the choice of $23$ terms $\left(\frac{\alpha_0}{\beta_0}\right)^{121}\left(\frac{\alpha_0-1}{\beta_0}\right)^{48} \frac{\beta_0}{1+\beta_0-\alpha_0} \approx 8.445889\cdot 10^{-07}$ is small again.

We remark that by keeping $\alpha_0=0.5$ fixed and letting $\beta_0$ change between $0.5$ and $1$ we made a plot of the level set structure
of $  \Theta_{K(\alpha_{0},\beta_{0})}$ for a hundred different values of $\beta_0$ and found an interval where for several parameter values, like $0.815$ 
(see left side of figure \ref{theta34*})
there is a substantially large region $\Omega \subset U$ such that $  \Theta_{K(\alpha_0,\beta_0)} <0$ on $\Omega, \: (\frac{1}{2}, \frac{1}{2})$ is on the boundary of $\Omega$, and points $(\alpha,\beta) \in U$ on the boundary of $\Omega$ satisfy $  \Theta_{K(\alpha_0,\beta_0)}(\alpha,\beta)=0$ but
$K(\alpha,\beta) \neq K(\alpha_0,\beta_0)$. We remark that the points $(\alpha,\beta) \in U$ with $K(\alpha,\beta) \succ  { \underline { M } }=K(\alpha_0,\beta_0)$, that is points mentioned in Theorem \ref{felsozona} also form a region where $  \Theta_{K(\alpha_0,\beta_0)}<0$, but
the boundary of this region contains points where $K(\alpha,\beta)= { \underline { M } }=K(\alpha_0,\beta_0)$. For most parameter values under the equi-kneading curve $K(\alpha,\beta)= { \underline { M } }$ we have a positive region, where $  \Theta_{K(\alpha_0,\beta_0)}>0$, but as the above examples show this is not always the case.

Another critical/exceptional parameter value was $\bbb_{0}=0.99179142171225$, the level set structure corresponding to this one is illustrated on the right side of Figure \ref{theta34*}.


\section{Further properties of $\TTT_{\uM}$}\label{secfurTTTM}

\begin{lemma}\label{thdiff}
For any $ { \underline { M } } \in \mathfrak{M}\backslash \{RL^{\infty}\}$ the function $  \Theta_{ { \underline { M } }}(\alpha,\beta)$ is infinitely differentiable on $U$ and also in small neighborhoods of the points $\{(\bbb,\beta): \frac{1}{2}<\beta<1\}$ on the boundary of $U$. Its first partial derivatives are
\begin{equation}\label{030901a}
 \partial_{\alpha}  \Theta_{ { \underline { M } }}(\alpha,\beta)=\sum_{k=1}^{\infty}\left(\frac{k}{\beta}\left(\frac{\alpha-1}{\beta}\right)^{k-1}\left(\frac{\alpha}{\beta}\right)^{{\overline m}_k}+\frac{{\overline m}_k}{\beta}\left(\frac{\alpha-1}{\beta}\right)^k\left(\frac{\alpha}{\beta}\right)^{{\overline m}_k-1}\right)
\end{equation}
and
\begin{equation}\label{030901b} \partial_{\beta}  \Theta_{ { \underline { M } }}(\alpha,\beta)=-1+\sum_{k=1}^{\infty}\left(\frac{-k-{\overline m}_k}{\beta}\right)\left(\frac{\alpha-1}{\beta}\right)^{k}\left(\frac{\alpha}{\beta}\right)^{{\overline m}_k},
\end{equation}
its second partials are
\begin{equation}\label{030901c}
\begin{split}
 \partial^2_{\alpha}  &\Theta_{ { \underline { M } }}(\alpha,\beta)= \sum_{k=1}^{\infty}\left(\frac{k(k-1)}{\beta^2}\left(\frac{\alpha-1}{\beta}\right)^{k-2}\left(\frac{\alpha}{\beta}\right)^{{\overline m}_k}+\right.\\ &\left.
2\frac{k \cdot {\overline m}_k}{\beta^2}\left(\frac{\alpha-1}{\beta}\right)^{k-1}\left(\frac{\alpha}{\beta}\right)^{{\overline m}_k-1}+\frac{{\overline m}_k({\overline m}_k-1)}{\beta^2}\left(\frac{\alpha-1}{\beta}\right)^{k}\left(\frac{\alpha}{\beta}\right)^{{\overline m}_k-2}\right),
\end{split}
\end{equation}
\begin{equation}\label{030902a}
\begin{split}
 \partial_{\alpha}  \partial_{\beta}  \Theta_{ { \underline { M } }}(\alpha,\beta)= &\partial_{\beta}  \partial_{\alpha}  \Theta_{ { \underline { M } }}(\alpha,\beta)= \sum_{k=1}^{\infty}\left(\frac{k(-k-\omm_{k})}{\beta^2}\left(\frac{\alpha-1}{\beta}\right)^{k-1}\left(\frac{\alpha}{\beta}\right)^{{\overline m}_k}+\right.\\  &\left.\frac{{\overline m}_k (-k-{\overline m}_k)}{\beta^{2} }\left(\frac{\alpha-1}{\beta}\right)^{k}\left(\frac{\alpha}{\beta}\right)^{{\overline m}_k-1}\right),
\end{split}
\end{equation}
\begin{equation}\label{030902b}
\begin{split}
& \partial^2_{\beta}  \Theta_{ { \underline { M } }}(\alpha,\beta)=
\sum_{k=1}^{\infty}\left(\frac{(k+{\overline m}_k)(k+\omm_{k}+1)}{\beta^2}\left(\frac{\alpha-1}{\beta}\right)^{k}\left(\frac{\alpha}{\beta}\right)^{{\overline m}_k} \right ).
\end{split}
\end{equation}
\end{lemma}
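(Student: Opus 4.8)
The plan is to establish the claimed formulas by showing that each differentiated series converges locally uniformly on $U$ (and on small neighborhoods of the boundary diagonal points), so that termwise differentiation is legitimate, and then to differentiate the individual terms by the product rule. The starting point is \eqref{*4b}: $\Theta_{\uM}(\aaa,\bbb)=1-\bbb+\sum_{k=1}^{\infty}\left(\frac{\aaa-1}{\bbb}\right)^{k}\left(\frac{\aaa}{\bbb}\right)^{\omm_{k}}$, where by Theorem \ref{thx} the exponents satisfy $\omm_1>0$ and $\omm_k\le\omm_{k+1}\le\omm_k+\omm_1$, so in particular the total degree $k+\omm_k$ of the $k$'th term is strictly increasing in $k$. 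The key quantitative input is the chain of estimates \eqref{02266a}, \eqref{02266b}, \eqref{02267a} from the proof of Lemma \ref{thprop}: for $r,s\ge 0$ and $j\le r+s$,
\[
\left|\frac{\partial^{j}}{\partial\aaa^{j}}\left(\frac{\aaa-1}{\bbb}\right)^{r}\left(\frac{\aaa}{\bbb}\right)^{s}\right|\le \frac{(r+s)(r+s-1)\cdots(r+s-j+1)}{\bbb^{j}}\max\left\{\left(\frac{\aaa}{\bbb}\right)^{r+s-j},\left(\frac{1-\aaa}{\bbb}\right)^{r+s-j}\right\},
\]
together with the analogous bounds for $\bbb$-derivatives (these come out of the same computation since $\partial_{\bbb}$ of $\left(\frac{\aaa-1}{\bbb}\right)^{r}\left(\frac{\aaa}{\bbb}\right)^{s}$ is just $-\frac{r+s}{\bbb}$ times the same monomial).

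First I would fix a compact subset of $U$, on which $q:=\max\{\aaa/\bbb,(1-\aaa)/\bbb\}<1$ uniformly, and note that near a boundary point $(\bbb,\bbb)$ with $\tfrac12<\bbb<1$ one still has $q<1$ on a small neighborhood (here $\aaa/\bbb$ is near $1$ but strictly below, and $(1-\aaa)/\bbb$ is near $(1-\bbb)/\bbb<1$). Then for any fixed differentiation order --- whether $\partial_{\aaa}$, $\partial_{\bbb}$, or a second-order operator $\partial_{\aaa}^{2},\partial_{\aaa}\partial_{\bbb},\partial_{\bbb}^{2}$ --- the $k$'th differentiated term is bounded by $C\,(k+\omm_k)^{2}q^{\,k+\omm_k-2}$, and since $k+\omm_k\ge k$ is strictly increasing, $\sum_k (k+\omm_k)^{2} q^{\,k+\omm_k-2}\le \sum_{n\ge 1} n^{2} q^{\,n-2}<\infty$. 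This gives absolute and locally uniform convergence of the termwise-differentiated series, which by the standard theorem on differentiating series justifies interchanging $\partial$ and $\sum$; iterating once more covers the second derivatives and also shows $\partial_{\aaa}\partial_{\bbb}\Theta_{\uM}=\partial_{\bbb}\partial_{\aaa}\Theta_{\uM}$ (equality of mixed partials for a $C^{2}$ function). Infinite differentiability then follows by induction on the order, exactly as in Lemma \ref{thprop}.

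With termwise differentiation in hand, the remaining work is the bookkeeping of the product rule. For \eqref{030901a}: $\partial_{\aaa}\left[\left(\frac{\aaa-1}{\bbb}\right)^{k}\left(\frac{\aaa}{\bbb}\right)^{\omm_k}\right]=\frac{k}{\bbb}\left(\frac{\aaa-1}{\bbb}\right)^{k-1}\left(\frac{\aaa}{\bbb}\right)^{\omm_k}+\frac{\omm_k}{\bbb}\left(\frac{\aaa-1}{\bbb}\right)^{k}\left(\frac{\aaa}{\bbb}\right)^{\omm_k-1}$, and summing gives the stated expression; the constant term $1-\bbb$ has zero $\aaa$-derivative. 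For \eqref{030901b}: write the $k$'th term as $(\aaa-1)^{k}\aaa^{\omm_k}\bbb^{-(k+\omm_k)}$, so $\partial_{\bbb}$ pulls down $-(k+\omm_k)\bbb^{-1}$ and the constant term contributes $-1$. The second partials \eqref{030901c}, \eqref{030902a}, \eqref{030902b} follow by applying $\partial_{\aaa}$ or $\partial_{\bbb}$ once more to \eqref{030901a} and \eqref{030901b} term by term --- for \eqref{030902b} it is cleanest to differentiate the $\bbb^{-(k+\omm_k)}$ form twice, giving the factor $(k+\omm_k)(k+\omm_k+1)$. I do not expect a genuine obstacle here; the only point requiring care is the uniform control of the series near the boundary diagonal, i.e. checking that the neighborhoods of $(\bbb,\bbb)$ stay inside a region where $\max\{\aaa/\bbb,(1-\aaa)/\bbb\}$ is bounded away from $1$, so that the convergence is uniform there and the derivative formulas extend continuously to those boundary points --- and this is immediate from the geometry of $U$ since $\aaa<\bbb$ forces $\aaa/\bbb<1$ strictly, while a small neighborhood of $(\bbb,\bbb)$ meets $\{\aaa\le\bbb\}$ in a set on which $\aaa/\bbb\le 1-\eps$ can fail only in the limit, so one works on a closed neighborhood with $\aaa/\bbb\le 1-\eps$ and notes the formulas are continuous up to the part of the boundary described.
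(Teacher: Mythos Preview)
Your argument for locally uniform convergence on compact subsets of $U$ is correct and matches the paper's reasoning. The formal termwise differentiation and the resulting formulas \eqref{030901a}--\eqref{030902b} are also right.

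There is, however, a genuine gap in your treatment of the boundary diagonal $\{(\bbb,\bbb):\tfrac12<\bbb<1\}$. The lemma asserts infinite differentiability in a \emph{full} neighborhood of each such point, i.e.\ in an open set of $\R^{2}$ that contains points with $\aaa>\bbb$. Your convergence argument rests entirely on $q=\max\{\aaa/\bbb,(1-\aaa)/\bbb\}<1$; but at $(\bbb_0,\bbb_0)$ one has $\aaa/\bbb=1$, and on any open neighborhood there are points with $\aaa/\bbb>1$, so the bound $q<1$ simply fails there. Your final paragraph tries to rescue this by restricting to $\{\aaa/\bbb\le 1-\eee\}$ and appealing to continuity up to the boundary, but that set excludes the diagonal point itself and in any case continuity of the derivative series up to the boundary does not yield differentiability \emph{at} the boundary, let alone in a two-sided neighborhood.

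The paper supplies exactly the missing ingredient: from Theorem~\ref{thx} one has $\omm_k\le k\,\omm_1$, and near $(\bbb_0,\bbb_0)$ the quantity $\left|\frac{\aaa-1}{\bbb}\right|$ is close to $\frac{1-\bbb_0}{\bbb_0}<1$. Hence one can choose a neighborhood $V_{\bbb_0}$ and $\eta_{\bbb_0}<1$ with
\[
\left|\frac{\aaa-1}{\bbb}\right|\left(\frac{\aaa}{\bbb}\right)^{\omm_1}<\eta_{\bbb_0}\qquad\text{on }V_{\bbb_0},
\]
which tolerates $\aaa/\bbb$ slightly larger than $1$. Since $\omm_k\le k\,\omm_1$, the $k$'th term (and its derivatives, up to polynomial factors in $k$) is controlled by $\eta_{\bbb_0}^{\,k}$, giving uniform convergence on $V_{\bbb_0}$ even across the diagonal. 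This is the step you need to add; the rest of your write-up is fine.
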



\begin{proof}
The series in $\eqref{030901a}$ and $\eqref{030901b}$ are obtained by termwise partial differentiation of the series definition of
$  \Theta_{ { \underline { M } }}(\alpha,\beta)$ given in $\eqref{*4b}$. If $1-\beta<\alpha<\beta$ then $\left|\frac{\alpha-1}{\beta}\right|<1$ and
$\left|\frac{\alpha}{\beta}\right|<1$. Hence the convergence of the series in $\eqref{030901a}$-$\eqref{030902b}$ is locally uniform in $U$. 
From Theorem \ref{thx} it follows that
\begin{equation}\label{030903a}
{\overline m}_k \leq k{\overline m}_1.
\end{equation}
 One can choose $\eta_{\beta} \in (0,1)$ and a small neighborhood $V_{\beta}$ of a point $(\beta,\beta), \; \frac{1}{2} < \beta <1 $ such that
\begin{equation}\label{030903b}
\left|\left(\frac{\alpha-1}{\beta}\right) \left(\frac{\alpha}{\beta}\right)^{{\overline m}_1}\right|<\eta_{\beta}<1.
\end{equation}
holds for all
$(\alpha,\beta) \in V_{\beta}$. In this neighborhood for parameter values under the diagonal $|\aaa/\bbb|>1$, but it is sufficiently close to $1$. Using $\eqref{030903a}$ one can easily verify the uniform convergence on $V_{\beta}$ of the series in $\eqref{030901a}$ and $\eqref{030901b}$. Similar arguments based on the estimates $\eqref{030903a}$ and $\eqref{030903b}$ can be used to verify locally uniform convergence of the series in $\eqref{030901c}$, $\eqref{030902a}$ and $\eqref{030902b}$. This shows that $  \Theta_{ { \underline { M } }}$ is twice continuously differentiable at the points considered in our theorem. For the higher derivatives  using $\eqref{030903a}$ and $\eqref{030903b}$ 
one can show similarly that the termwise $ \partial_{\beta}^j  \partial_{\alpha}^i$ partial derivative series in $\eqref{*4b}$ converges uniformly in $V_{\bbb}.$
\end{proof}


Next we need an estimate of ${\overline m}_1$ appearing in the definition of $  \Theta_{ { \underline { M } }}(\alpha,\beta)$ in $\eqref{02261a}$ or \eqref{*4b}.
Suppose
\begin{equation}\label{030906a}
\uM\in \mathfrak{M}\backslash \{RL^{\infty}\}\text{ and } { \underline { M }|(m_{1}+2} )=R \underbrace{L \dots L}_{m_{1}}X , \text{ with $X \in \{R,C\}$.}
\end{equation}

Then,  $ m_1={\overline m}_1$. This means that we need to estimate $m_1=\omm_{1}(\aaa,\bbb)$ when $ { \underline { M } }=K(\alpha,\beta)$ with $\alpha$ close to $\beta$.


\begin{lemma}\label{mest}
Assume $\beta_0 \in (\frac{1}{2},1)$ is fixed. Then there exists a neighborhood $V_{\beta_0}$ of $(\beta_0,\beta_0)$ such that for all $(\alpha,\beta) \in V_{\beta_0} \cap U$ we have
\begin{equation}\label{030907a}
\frac{\beta_0}{1-\beta_0} -2 < {\overline m}_1(\alpha,\beta)<\frac{\beta_0}{1-\beta_0}+1
\end{equation}
\end{lemma}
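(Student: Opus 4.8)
The plan is to estimate $m_1 = \overline m_1(\alpha,\beta)$ directly from the dynamics, by tracking where the orbit of the turning point lands. Recall that $\overline m_1$ counts the length of the initial block of $L$'s in $\underline M^-$ after the first $R$; in dynamical terms, $K(\alpha,\beta) = \underline I_{\alpha,\beta}(\beta)$ starts with $R$ (since $\beta > \alpha$ when $(\alpha,\beta)\in U$ with $\alpha$ close to $\beta$, indeed $\beta > 1/2 \geq$ ... actually $\alpha < \beta$ always in $U$), then the next $m_1$ iterates of $\beta$ land in $[0,\alpha)$ and the $(m_1+2)$-nd falls in $(\alpha,1]$ (or equals $\alpha$, giving a $C$). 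So $m_1$ is characterized by
\[
T_{\alpha,\beta}^{1+j}(\beta) < \alpha \quad (1 \le j \le m_1), \qquad T_{\alpha,\beta}^{1+m_1+1}(\beta) \ge \alpha,
\]
where I mean the comparisons with the appropriate branch. Now $T_{\alpha,\beta}(\beta) = \frac{\beta}{1-\alpha}(1-\beta)$, and as long as the orbit stays in $[0,\alpha)$ each further iterate is multiplied by $\frac{\beta}{\alpha}$; so $T_{\alpha,\beta}^{1+j}(\beta) = \left(\frac{\beta}{\alpha}\right)^{j}\frac{\beta}{1-\alpha}(1-\beta)$ for $j = 0, 1, \dots, m_1$. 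Thus $m_1$ is essentially the largest $j$ for which $\left(\frac{\beta}{\alpha}\right)^{j}\frac{\beta(1-\beta)}{1-\alpha} < \alpha$.

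First I would take logarithms: the condition $\left(\frac{\beta}{\alpha}\right)^{j}\frac{\beta(1-\beta)}{1-\alpha} < \alpha$ reads
\[
j < \frac{\log\!\big(\alpha(1-\alpha)\big) - \log\!\big(\beta(1-\beta)\big)}{\log\beta - \log\alpha},
\]
so $m_1$ is the integer part (up to $\pm 1$ from floor/ceiling and from the $C$-case boundary) of the right-hand side. The next step is to estimate this quotient as $(\alpha,\beta) \to (\beta_0,\beta_0)$. Write $\alpha = \beta_0 - \epsilon_1$, $\beta = \beta_0 + \epsilon_2$ with $\epsilon_1,\epsilon_2$ small (and $\epsilon_1 > 0$ since we are at or below the diagonal — note $\alpha < \beta$ in $U$); then $\log\beta - \log\alpha \approx \frac{\alpha - \beta}{-\beta_0}$... more carefully, $\log\beta - \log\alpha \to 0$ while $\log(\alpha(1-\alpha)) - \log(\beta(1-\beta)) \to 0$ as well, so this is a $0/0$ limit. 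Both numerator and denominator are analytic in $(\alpha,\beta)$ near $(\beta_0,\beta_0)$ and vanish on the diagonal; I would factor out $(\beta - \alpha)$ from each. For the denominator, $\log\beta - \log\alpha = (\beta-\alpha)\int_0^1 \frac{dt}{\alpha + t(\beta-\alpha)}$, which tends to $\frac{\beta-\alpha}{\beta_0} \cdot \big(1 + o(1)\big)$. For the numerator, $\log(\alpha(1-\alpha)) - \log(\beta(1-\beta)) = \big(\log\alpha - \log\beta\big) + \big(\log(1-\alpha) - \log(1-\beta)\big)$; the first piece is $-(\beta-\alpha)\cdot\frac{1}{\beta_0}(1+o(1))$ and the second is $(\beta-\alpha)\cdot\frac{1}{1-\beta_0}(1+o(1))$. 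Dividing, the $(\beta-\alpha)$ factors cancel and the quotient tends to
\[
\frac{-\frac{1}{\beta_0} + \frac{1}{1-\beta_0}}{\frac{1}{\beta_0}} = \frac{\beta_0}{1-\beta_0} - 1
\]
as $(\alpha,\beta) \to (\beta_0,\beta_0)$.

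Finally, since $m_1$ equals this quotient up to an additive error of at most $1$ (from the floor) and possibly one more from the boundary convention when $X = C$ in \eqref{030906a}, for $(\alpha,\beta)$ in a sufficiently small neighborhood $V_{\beta_0}$ of $(\beta_0,\beta_0)$ the quotient lies within, say, $1/2$ of $\frac{\beta_0}{1-\beta_0} - 1$, and hence
\[
\frac{\beta_0}{1-\beta_0} - 2 < \overline m_1(\alpha,\beta) < \frac{\beta_0}{1-\beta_0} + 1,
\]
which is \eqref{030907a}. I expect the main obstacle to be bookkeeping the off-by-one issues precisely — carefully distinguishing the strict inequality $T^{1+m_1}(\beta) < \alpha$ from $T^{1+m_1+1}(\beta) \ge \alpha$, handling the degenerate case where the $(m_1+2)$-nd iterate equals $\alpha$ exactly (the $C$ in $X \in \{R,C\}$), and confirming that the approximation error in the $0/0$ limit can indeed be made smaller than the slack $1/2$ by shrinking $V_{\beta_0}$; the analytic estimates themselves are routine. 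One should also note that the hypothesis \eqref{030906a} is exactly what guarantees $m_1 = \overline m_1$, so the claim is about this well-defined quantity.
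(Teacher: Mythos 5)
Your proposal follows essentially the same route as the paper's proof: characterize $\overline m_1$ by the condition that the iterates of $\beta$ stay in the left branch, take logarithms, and evaluate the resulting $0/0$ quotient as $(\alpha,\beta)\to(\beta_0,\beta_0)$ --- the paper does this last step with Cauchy's mean value theorem, which pins the quotient exactly between $\frac{\alpha}{1-\beta}$ and $\frac{\beta}{1-\alpha}$, rather than by a Taylor/$o(1)$ expansion. The one thing to fix is your indexing: the correct conditions are $T^{j}_{\alpha,\beta}(\beta)<\alpha$ for $1\le j\le m_1$ and $T^{m_1+1}_{\alpha,\beta}(\beta)\ge\alpha$, so the largest $j$ with $\left(\frac{\beta}{\alpha}\right)^{j}\frac{\beta(1-\beta)}{1-\alpha}<\alpha$ is $m_1-1$, not $m_1$; this shifts your estimate by exactly the $+1$ needed to place $\overline m_1$ in $\left(\frac{\beta_0}{1-\beta_0}-\frac{3}{2},\ \frac{\beta_0}{1-\beta_0}+\frac{1}{2}\right)$ and hence inside \eqref{030907a}, whereas taken literally your version only yields the weaker lower bound $\overline m_1>\frac{\beta_0}{1-\beta_0}-\frac{5}{2}$.
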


\begin{proof}
Choose a neighborhood $V_{\beta_0}$ of $(\beta_0,\beta_0)$ such that for all $(\alpha,\beta) \in V_{\beta_0}\cap U$.
We have
\begin{equation}\label{030907b}
\max\left(\left|\frac{\aaa}{1-\beta} -\frac{\beta_0}{1-\beta_0}\right|,\left|\frac{\bbb}{1-\aaa} -\frac{\beta_0}{1-\beta_0}\right|\right)<\frac{1}{100}.
\end{equation}
The value ${\overline m}_1={\overline m}_1(\alpha,\beta)$ is determined by the property
\begin{equation}\label{030908a}
\begin{split}
T^{1+\omm_{1}}\saab (\bbb)=&\frac{\beta}{1-\alpha}(1-\beta) \left(\frac{\beta}{\alpha}\right)^{{\overline m}_1} \geq \alpha\\
\text{but } T^{\omm_{1}}\saab (\bbb)=&\frac{\beta}{1-\alpha}(1-\beta) \left(\frac{\beta}{\alpha}\right)^{{\overline m}_1-1} < \alpha.
\end{split}
\end{equation}
This implies
\begin{equation}\label{030908b}
\frac{\log(1-\alpha)-\log(1-\beta)}{\log(\beta)-\log(\alpha)}-1 \leq {\overline m}_1 < \frac{\log(1-\alpha)-\log(1-\beta)}{\log(\beta)-\log(\alpha)}.
\end{equation}
By Cauchy's mean value theorem there exists $\gamma \in (\alpha,\beta)$ such that
\begin{equation}\label{030908c}
\frac{\log(1-\alpha)-\log(1-\beta)}{\log(\beta)-\log(\alpha)}=\frac{-\frac{1}{1-\gamma}}{-\frac{1}{\gamma}}=\frac{\gamma}{1-\gamma}.
\end{equation}
Since $\aaa<\ggg<\bbb$
\begin{equation}\label{030909a}
\frac{\aaa}{1-\bbb}<\frac{\ggg}{1-\ggg}<\frac{\bbb}{1-\aaa}
\end{equation}
From \eqref{030907b}, \eqref{030908b} and \eqref{030909a} we infer \eqref{030907a}.
\end{proof}


\section{The almost orthogonality of the equi-kneading curves to the diagonal}\label{secortho}

\begin{figure}
\begin{center}
\includegraphics[width=7.0cm]{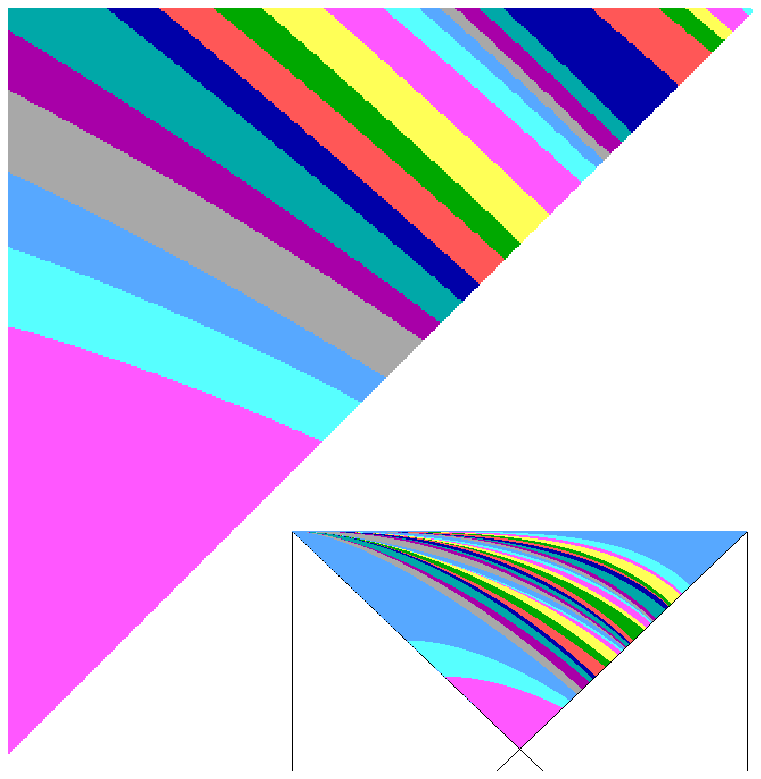}
\caption{Blow up of equi-kneading curves near $(1/2,1/2)$} 
\label{fig505*}
\end{center}
\end{figure}

By Theorem \ref{MV}, $\lim_{\mu\to\GGG(\uM)-}\fff_{\uM}(\mmm)=\fff_{\uM}(\GGG(\uM))=1$. Recall that the transformation $(\lll(\aaa,\bbb),\mmm(\aaa,\bbb))=(\frac{\bbb}{\aaa},\frac{\bbb}{1-\aaa})$ maps $U$ onto $D$ and
$\lll=1$ means $\aaa=\bbb$. Hence $\lim_{\aaa\to\aaa_{2}(\uM)-}\PPP_{\uM}(\aaa)=\bbb_{\uM}=\aaa_{2}(\uM)$ exists and $(\frac{\bbb_{\uM}}{\bbb_{\uM}},\frac{\bbb_{\uM}}{1-\bbb_{\uM}})=(1,\GGG(\uM))$.
That is, the curves
$(\alpha,\Psi_{ { \underline { M } }}\left(\alpha)\right), \; \alpha \in (\alpha_1( { \underline { M } }),\alpha_2( { \underline { M } }))$ reach the boundary of $U$ at  the point $(\bbb_{\uM},\bbb_{\uM})$ on the line segment
$\{(\beta,\beta):\frac{1}{2}<\beta<1\}$.
 Reversing our notation for $\beta_0 \in \left(\frac{1}{2},1\right)$ now we denote by $\Psi^{\beta_0}$ the curve
with the property $\lim_{\alpha \rightarrow \beta_0-} \Psi^{\beta_0}(\alpha)=\beta_0.$ In fact, we can extend the definition of $\Psi^{\beta}$
setting $\Psi^{\beta_0}(\beta_0)=\beta_0$. Moreover, by Theorem \ref{thx}, $  \Theta_{ { \underline { M } }_{\beta_0}}(\alpha,
\Psi^{\beta_0}(\alpha))=  \Theta^{\beta_0}(\alpha, \Psi^{\beta_0}(\alpha))=0$ for $(\alpha,\Psi^{\beta_0}(\alpha)) \in U$. We have seen in Lemma
\ref{thdiff} that $\TTT^{\beta_0}(\alpha,\beta)$ is infinitely differentiable at the point $(\beta_0,\beta_0)$ as well. 

Therefore, using the
implicit definition one could even extend the
definition of $\Psi^{\beta_0}$ onto a small interval $(\beta_0, \beta_0+ { \varepsilon })$, by setting $  \Theta^{\beta_0}(\alpha,\Psi^{\beta_0}(\alpha))=0$. Indeed, it is possible but a little caution is necessary.
\begin{lemma}\label{thb0}
Suppose $\beta_0 \in (\frac{1}{2},1)$ and we consider $\Psi^{\beta_0}, \:   \Theta^{\beta_0}$ defined as above. Then
\begin{equation}\label{030911a}
  \Theta^{\beta_0}(\beta,\beta)=0
\end{equation}
for all $\beta \in (\frac{1}{2},1)$
\begin{equation}\label{030911c}
 \partial_{\alpha}   \Theta^{\beta_0}(\beta_0,\beta_0)=0, \; \partial_{\beta}   \Theta^{\beta_0}(\beta_0,\beta_0)=0.
\end{equation}
\end{lemma}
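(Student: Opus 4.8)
The first identity \eqref{030911a} is the part that
requires genuine work; the vanishing of the gradient in \eqref{030911c}
will follow from it by a short differentiation argument. So the plan is
to prove \eqref{030911a} first, then deduce \eqref{030911c}.

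For \eqref{030911a}, fix $\beta\in(\tfrac12,1)$ and let
$\uM_\beta=K(\beta,\beta)$ be the kneading sequence of the symmetric tent
map $T_{\beta,\beta}$; by definition $\Theta^{\beta_0}$ is
$\Theta_{\uM_{\beta_0}}$, but the point of the lemma is that we are
evaluating it at the \emph{diagonal} point $(\beta,\beta)$ with
$\beta\neq\beta_0$ allowed. The idea is that along the whole diagonal the
two branch slopes coincide ($\lambda=\mu=\tfrac{\beta}{\beta}$ versus
$\tfrac{\beta}{1-\beta}$... in fact the relevant ratios appearing in
\eqref{*4b} are $\tfrac{\alpha-1}{\beta}$ and $\tfrac{\alpha}{\beta}$, and
on the diagonal $\alpha=\beta$ these become $\tfrac{\beta-1}{\beta}$ and
$1$), so the series collapses dramatically. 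Substituting $\alpha=\beta$
into \eqref{*4b} gives
\begin{equation}\label{plan-collapse}
\Theta_{\uM_{\beta_0}}(\beta,\beta)=1-\beta+\sum_{k=1}^{\infty}
\left(\frac{\beta-1}{\beta}\right)^{k}\cdot 1^{\,\overline m_k}
=1-\beta+\sum_{k=1}^{\infty}\left(\frac{\beta-1}{\beta}\right)^{k},
\end{equation}
which is exactly the telescoping geometric computation already carried out
at the end of the proof of Lemma \ref{thprop}: since
$\bigl|\tfrac{\beta-1}{\beta}\bigr|<1$ the sum is
$\tfrac{(\beta-1)/\beta}{1-(\beta-1)/\beta}=\beta-1$, and the total is
$0$. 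The one point needing care, and the reason the statement says "a
little caution is necessary", is that $(\beta,\beta)$ for $\beta\neq\beta_0$
lies \emph{on the boundary} of $U$, not in $U$, so one must invoke the
extension of $\Theta_{\uM}$ to neighborhoods of boundary diagonal points
established in Lemma \ref{thdiff} (which also guarantees the series still
converges there, uniformly), rather than Theorem \ref{thx} directly. Note
also that the value $\overline m_k$ is irrelevant here precisely because
$\left(\tfrac{\alpha}{\beta}\right)^{\overline m_k}=1$ on the diagonal —
this is the structural reason the curve $\Theta^{\beta_0}=0$ contains the
entire open diagonal segment, not just the single point $(\beta_0,\beta_0)$.

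For \eqref{030911c}: by \eqref{030911a} the function
$\beta\mapsto\Theta^{\beta_0}(\beta,\beta)$ is identically zero on
$(\tfrac12,1)$, so its derivative at $\beta_0$ vanishes, giving
$\partial_\alpha\Theta^{\beta_0}(\beta_0,\beta_0)
+\partial_\beta\Theta^{\beta_0}(\beta_0,\beta_0)=0$. This is one linear
relation; I need a second to conclude both partials are individually zero.
For that I would evaluate the explicit series \eqref{030901a} and
\eqref{030901b} from Lemma \ref{thdiff} at $\alpha=\beta=\beta_0$. In
\eqref{030901b} one again gets $\left(\tfrac{\alpha}{\beta}\right)^{\overline m_k}=1$,
reducing $\partial_\beta\Theta^{\beta_0}(\beta_0,\beta_0)$ to
$-1+\tfrac1{\beta_0}\sum_{k\ge1}(-k-\overline m_k)\left(\tfrac{\beta_0-1}{\beta_0}\right)^{k}$,
which still involves the $\overline m_k$. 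So a cleaner route: differentiate
\eqref{030911a} — but one needs a second \emph{independent} curve through
$(\beta_0,\beta_0)$ on which $\Theta^{\beta_0}$ is controlled. The natural
choice is the equi-kneading curve itself: by Theorem \ref{thx},
$\Theta^{\beta_0}(\alpha,\Psi^{\beta_0}(\alpha))\equiv 0$ for $\alpha$ just
below $\beta_0$, and $\Psi^{\beta_0}$ extends continuously with
$\Psi^{\beta_0}(\beta_0)=\beta_0$. If $\Psi^{\beta_0}$ is differentiable at
$\beta_0$ with slope $s\neq 1$, then differentiating gives
$\partial_\alpha\Theta^{\beta_0}(\beta_0,\beta_0)
+s\,\partial_\beta\Theta^{\beta_0}(\beta_0,\beta_0)=0$; combined with the
diagonal relation ($s=1$ version) and $s\neq1$ this forces both partials to
be $0$.

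\textbf{Main obstacle.} The technical difficulty is exactly the one the
authors flag: $\Psi^{\beta_0}$ is defined on the diagonal only by a limit,
and its one-sided differentiability at $\beta_0$ is not yet available — in
fact establishing the \emph{slope} of $\Psi^{\beta_0}$ at the diagonal is
the whole point of this section, so one cannot assume it. Therefore I would
instead get the second linear relation directly from the series: evaluate
$\partial_\alpha\Theta^{\beta_0}$ from \eqref{030901a} at $\alpha=\beta=\beta_0$,
where the first group of terms becomes
$\tfrac1{\beta_0}\sum_{k\ge1}k\left(\tfrac{\beta_0-1}{\beta_0}\right)^{k-1}$
(a computable geometric-type sum) and the second group is
$\tfrac1{\beta_0}\sum_{k\ge1}\overline m_k\left(\tfrac{\beta_0-1}{\beta_0}\right)^{k}$;
similarly for $\partial_\beta$ from \eqref{030901b}. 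Adding them reproduces
the relation from differentiating \eqref{030911a}, but more usefully one
observes that the $\overline m_k$-dependent part of
$\partial_\alpha\Theta^{\beta_0}$ at $(\beta_0,\beta_0)$ equals
$\tfrac1{\beta_0}\sum_k\overline m_k\left(\tfrac{\beta_0-1}{\beta_0}\right)^{k}$
while the $\overline m_k$-dependent part of $\partial_\beta\Theta^{\beta_0}$
is $-\tfrac1{\beta_0}\sum_k\overline m_k\left(\tfrac{\beta_0-1}{\beta_0}\right)^{k}$,
i.e. they cancel in the sum; and separately the $\overline m_k$-free parts
can be summed explicitly and shown to cancel as well. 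Pushing this a bit
further — tracking the $k$-weighted and $\overline m_k$-weighted geometric
series individually — should show each of $\partial_\alpha\Theta^{\beta_0}$
and $\partial_\beta\Theta^{\beta_0}$ vanishes at $(\beta_0,\beta_0)$ on its
own, which is \eqref{030911c}. I expect the bookkeeping of these three or
four geometric sums, and carefully justifying termwise differentiation at
the boundary point via Lemma \ref{thdiff}, to be the only real work.
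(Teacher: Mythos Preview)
Your treatment of \eqref{030911a} is correct and is exactly the paper's argument: substitute $\alpha=\beta$ into \eqref{*4b}, the factors $(\alpha/\beta)^{\overline m_k}$ become $1$, and the remaining geometric series sums to $\beta-1$.

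For \eqref{030911c} there is a real gap in your fallback plan. Evaluating \eqref{030901a} and \eqref{030901b} at $(\beta_0,\beta_0)$ and writing $q=(\beta_0-1)/\beta_0$ gives
\[
\partial_\alpha\Theta^{\beta_0}(\beta_0,\beta_0)=\beta_0+\frac{1}{\beta_0}\sum_{k\ge1}\overline m_k\,q^k,
\qquad
\partial_\beta\Theta^{\beta_0}(\beta_0,\beta_0)=-\beta_0-\frac{1}{\beta_0}\sum_{k\ge1}\overline m_k\,q^k.
\]
These add to zero --- that is just differentiation of \eqref{030911a} along the diagonal --- but to show either one vanishes \emph{individually} you would need $\sum_{k\ge1}\overline m_k\,q^k=-\beta_0^2$. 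This is \emph{not} a formal geometric--series identity: the $\overline m_k$ are determined by the kneading sequence $\uM_{\beta_0}$, and the equality encodes precisely the dynamical fact that the equi-kneading curve for $\uM_{\beta_0}$ lands at $(\beta_0,\beta_0)$. ``Pushing the bookkeeping further'' will not produce it; you would be re-deriving the lemma by other means.

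The paper avoids this entirely with a short contradiction via the Implicit Function Theorem --- essentially the idea you touched on first and then abandoned. Suppose the gradient of $\Theta^{\beta_0}$ at $(\beta_0,\beta_0)$ were nonzero. Since $\Theta^{\beta_0}$ is $C^\infty$ in a neighbourhood of $(\beta_0,\beta_0)$ (Lemma~\ref{thdiff}), the IFT says its zero set there is a \emph{single} smooth curve. By \eqref{030911a} the diagonal is contained in this zero set, so locally the zero set \emph{is} the diagonal. But for $\alpha<\beta_0$ close to $\beta_0$ the point $(\alpha,\Psi^{\beta_0}(\alpha))$ lies in $U$, hence strictly above the diagonal, and $\Theta^{\beta_0}$ vanishes there too (Theorem~\ref{thx}). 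Two distinct zero curves through $(\beta_0,\beta_0)$ contradict the IFT conclusion, so the gradient must vanish. Notice this needs nothing about the differentiability of $\Psi^{\beta_0}$ at $\beta_0$ --- only its existence and the fact that it sits in $U$. That is precisely the missing observation: apply the IFT to $\Theta^{\beta_0}$, not to $\Psi^{\beta_0}$.
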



\begin{proof}
Obviously $  \Theta^{\beta_0}(\beta,\beta)=1-\beta+\sum_{k=1}^{\infty}\left(\frac{\beta-1}{\beta}\right)^{k}=0$. \\ 

If we had
 $( \partial_{\alpha}   \Theta^{\beta_0}(\beta_0,\beta_0), \partial_{\beta}\Theta^{\beta_0}(\beta_0,\beta_0)) \neq (0,0),$ then 
 by the Implicit Function Theorem the level set $  \Theta^{\beta_0}(\alpha,\beta)=   \Theta^{\beta_0}(\alpha_0,\beta_0)=0$ would equal a differentiable curve in a small neighborhood of $(\beta_0,\beta_0)$. We have two curves $\{(\beta,\beta):\beta \in (\frac{1}{2},1)\}$ and the graph of $(\alpha,\Psi^{\beta_0}(\alpha))$ which arrives to $(\beta_0,\beta_0)$ from $U$ and $  \Theta^{\beta_0}$ vanishes on both,
which is impossible.
This implies $\eqref{030911c}$.
\end{proof}


Hence we cannot determine the derivative of $\Psi^{\beta_0}$ at $\beta_0$ by using implicit differentiation of $  \Theta^{\beta_0}(\alpha,\Psi^{\beta_0}(\alpha))$ at $\alpha=\beta_0$. To avoid awkward notation we denote by $D_{\alpha} \Psi^{\beta_0}$ the derivative of $\Psi^{\beta_0}$.
\begin{theorem}\label{thperp}
With the above notation
\begin{equation}\label{030913a}
\lim_{\beta_0 \rightarrow 1-} D_{\alpha} \Psi^{\beta_0}(\beta_0)=-1.
\end{equation}
This means that the curves $\Psi^{\beta_0}$ are almost perpendicular to the diagonal $\{(\beta,\beta):\frac{1}{2}<\beta<1\}$ at the point $(\beta_0,\beta_0)$ when $\beta_0$ is close to $1$.
\end{theorem}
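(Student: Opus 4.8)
\emph{Proof plan.}
Throughout fix $\beta_0\in(\tfrac12,1)$ and abbreviate $\TTT^{\beta_0}$, $\Psi^{\beta_0}$ as in the statement; let $\overline{m}_k$ be the integers \eqref{041801a} attached to the kneading sequence $\uM_{\beta_0}=K(\alpha,\Psi^{\beta_0}(\alpha))$ ($\alpha<\beta_0$ close to $\beta_0$), and set
\begin{equation}
A:=\partial_\alpha^2\TTT^{\beta_0}(\beta_0,\beta_0),\qquad C:=\partial_\beta^2\TTT^{\beta_0}(\beta_0,\beta_0),
\end{equation}
which are finite by Lemma \ref{thdiff}. The plan has two steps: first show that $D_\alpha\Psi^{\beta_0}(\beta_0)=A/C$ for $\beta_0$ close to $1$, and then show $A/C\to-1$ as $\beta_0\to1-$.

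For the first step the idea is to divide out the diagonal, turning the degenerate situation of Lemma \ref{thb0} into a regular one. Since $\TTT^{\beta_0}(\beta,\beta)=0$ for $\beta$ near $\beta_0$ by \eqref{030911a}, integrating $\partial_\beta\TTT^{\beta_0}$ along the vertical segment from $(\alpha,\alpha)$ to $(\alpha,\beta)$ (Hadamard's lemma, using that by Lemma \ref{thdiff} $\TTT^{\beta_0}$ is $C^\infty$ on a full neighbourhood of $(\beta_0,\beta_0)$) produces a $C^\infty$ function $G$ with
\begin{equation}
\TTT^{\beta_0}(\alpha,\beta)=(\alpha-\beta)\,G(\alpha,\beta).
\end{equation}
Differentiating this once and twice and evaluating where $\alpha-\beta=0$ gives $G(\beta_0,\beta_0)=-\partial_\beta\TTT^{\beta_0}(\beta_0,\beta_0)=0$ (using \eqref{030911c}), $\partial_\alpha G(\beta_0,\beta_0)=A/2$ and $\partial_\beta G(\beta_0,\beta_0)=-C/2$. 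Once it is known that $C\neq0$ (established in the next step), the Implicit Function Theorem presents $\{G=0\}$ near $(\beta_0,\beta_0)$ as a $C^\infty$ graph $\beta=g(\alpha)$ with $g(\beta_0)=\beta_0$ and $g'(\beta_0)=-\partial_\alpha G/\partial_\beta G=A/C$. Finally, for $\alpha<\beta_0$ close to $\beta_0$ the point $(\alpha,\Psi^{\beta_0}(\alpha))$ lies in $U$, satisfies $\Psi^{\beta_0}(\alpha)>\alpha$ and $\TTT^{\beta_0}(\alpha,\Psi^{\beta_0}(\alpha))=0$ by Theorem \ref{thx}, hence $G(\alpha,\Psi^{\beta_0}(\alpha))=0$; since $(\alpha,\Psi^{\beta_0}(\alpha))\to(\beta_0,\beta_0)$ it falls inside the Implicit Function Theorem box, so $\Psi^{\beta_0}$ coincides with $g$ there and $D_\alpha\Psi^{\beta_0}(\beta_0)=g'(\beta_0)=A/C$.

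For the asymptotics, substituting $\frac{\alpha-1}{\beta}=-q$ with $q:=\frac{1-\beta_0}{\beta_0}$ and $\frac{\alpha}{\beta}=1$ into the series \eqref{030901c} and \eqref{030902b} yields
\begin{equation}
\beta_0^2A=\sum_{k=1}^\infty\Big(k(k-1)(-q)^{k-2}+2k\,\overline{m}_k(-q)^{k-1}+\overline{m}_k(\overline{m}_k-1)(-q)^{k}\Big),
\end{equation}
and $\beta_0^2C=\sum_{k=1}^\infty(k+\overline{m}_k)(k+\overline{m}_k+1)(-q)^{k}$. By Lemma \ref{mest}, $\overline{m}_1=\tfrac1q+O(1)$, so $\overline{m}_1q=1+O(q)$ and $\overline{m}_1\to\infty$ as $\beta_0\to1-$; by Theorem \ref{thx}, $\overline{m}_k\le k\overline{m}_1=O(k/q)$. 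The $k=1$ term of $\beta_0^2A$ is $2\overline{m}_1-\overline{m}_1^2q+\overline{m}_1q=\overline{m}_1(2-\overline{m}_1q)+\overline{m}_1q=\overline{m}_1+O(1)$, and the bound $\overline{m}_k\le k\overline{m}_1$ makes the sum of the absolute values of the $k\ge2$ terms $O(\sum_{k\ge2}k^2q^{k-2})=O(1)$ uniformly for small $q$; hence $\beta_0^2A=\overline{m}_1+O(1)$. Likewise the $k=1$ term of $\beta_0^2C$ is $-(1+\overline{m}_1)(2+\overline{m}_1)q=-\overline{m}_1+O(1)$ and its tail is $O(1)$, so $\beta_0^2C=-\overline{m}_1+O(1)$ (in particular $C\neq0$ for $\beta_0$ near $1$, as was used above). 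Therefore $D_\alpha\Psi^{\beta_0}(\beta_0)=A/C=\dfrac{\overline{m}_1+O(1)}{-\overline{m}_1+O(1)}\to-1$ as $\beta_0\to1-$.

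I expect the first step to be the main obstacle: because every first partial of $\TTT^{\beta_0}$ vanishes at $(\beta_0,\beta_0)$ (Lemma \ref{thb0}), the Implicit Function Theorem is not directly available and one must descend to second-order data. Dividing out $\alpha-\beta$ — legitimate precisely because $\TTT^{\beta_0}$ is $C^\infty$ near the diagonal and vanishes identically on it — restores a nondegenerate situation for the cofactor $G$, after which $A$ and $C$ carry all the information; this is the rigorous counterpart of the heuristic that the second differential of $\TTT^{\beta_0}$ near $(1,1)$ behaves like a constant of size $\sim\overline{m}_1$ times $x^2-y^2$, whose two null lines are the diagonals of the square. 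The estimates of the third step are routine but genuinely require the bound $\overline{m}_k\le k\overline{m}_1$, since without it the tails of the series for $A$ and $C$ could a priori compete with the dominant $k=1$ terms.
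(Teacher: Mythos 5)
Your argument is correct, and its computational core --- the expansions of $\partial_\alpha^2\TTT^{\beta_0}$ and $\partial_\beta^2\TTT^{\beta_0}$ at $(\beta_0,\beta_0)$ driven by Lemma \ref{mest} and the bound ${\overline m}_k\le k{\overline m}_1$ --- is exactly the paper's (compare \eqref{030914a} and \eqref{030916b}). Where you genuinely diverge is in how the slope is extracted from the degenerate critical point. The paper assembles the full second differential \eqref{030917a}, including the mixed partial, shows that after normalization it tends to the quadratic form $x^2-y^2$, and then concludes ``by approximation'' that the zero level set near $(\beta_0,\beta_0)$ is close to the two lines of slope $\pm1$ --- a step it leaves at the level of a sketch. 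Your Hadamard factorization $\TTT^{\beta_0}=(\alpha-\beta)G$ replaces that sketch by a standard Implicit Function Theorem argument for the cofactor $G$, and it buys two things: a rigorous identification of $\Psi^{\beta_0}$ with the nondegenerate branch (here your observation that $\Psi^{\beta_0}(\alpha)>\alpha$, i.e.\ the equi-kneading curve is not the diagonal factor, is the small but essential point), and the \emph{exact} formula $D_\alpha\Psi^{\beta_0}(\beta_0)=\partial_\alpha^2\TTT^{\beta_0}(\beta_0,\beta_0)\big/\partial_\beta^2\TTT^{\beta_0}(\beta_0,\beta_0)$ for each fixed $\beta_0$ near $1$, with no need to estimate the mixed partial. (The identity $2\,\partial_\alpha\partial_\beta\TTT^{\beta_0}=-(\partial_\alpha^2+\partial_\beta^2)\TTT^{\beta_0}$ on the diagonal, forced by $\TTT^{\beta_0}(\beta,\beta)\equiv0$, is precisely what makes $z=1$ a root of the paper's quadratic $Cz^2+2Bz+A$, so the other root is the product of the roots, $A/C$ --- your formula and the paper's level-set picture are two readings of the same fact.) Your formula also checks out against the paper's worked examples: for $\uM=RLC$ it gives $2/(-2)=-1$, and for $\uM=RLLRC$ it gives $-1/(\sqrt{5}-1)=-(\sqrt{5}+1)/4$, which equals the paper's $-(\sqrt{5}+3)/(2\sqrt{5}+2)$. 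The only hypothesis to keep visible in a full write-up is that the Hadamard integral from $(\alpha,\alpha)$ to $(\alpha,\beta)$ must stay inside the two-sided neighbourhood of $(\beta_0,\beta_0)$ on which Lemma \ref{thdiff} gives smoothness, which a small ball arranges.
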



Before proving this theorem we give some examples.
M. Misiurewicz asked the first listed author at a conference whether the formula
$\lim_{\beta_0 \rightarrow 1-} D_{\alpha} \Psi^{\beta_0}(\beta_0)=-1$ can be improved to $ D_{\alpha} \Psi^{\beta_0}(\beta_0)=-1.$ Of course, especially for $ { \beta }_{0}$ close to $1/2$ the computer images  (see Figure \ref{fig505*}) suggest that this is not the case.
One can look at some specific kneading sequences,
say corresponding to periodic turning points,
and consider the implicitely defined algebraic curves.

{\bf The first, easiest attempt, consider ${ \underline { M } }=RLC$.}

Recall that
$$R_{ { \alpha }, { \beta }}(x)={\beta\over\alpha-1}(x-1)\text{ and }
L_{ { \alpha }, { \beta }}(x)={\beta\over\alpha}x.$$
and we need to consider
$$T\saab^{2}(\bbb)=L_{ { \alpha }, { \beta }}(R_{ { \alpha }, { \beta }}(\beta))=L_{ { \alpha }, { \beta }}(R_{ { \alpha }, { \beta }}(L_{ { \alpha }, { \beta }}( { \alpha })))= { \alpha }.$$
This yields after substitution and rearrangement the implicit equation
$$-{\alpha ^3-\alpha ^2-\beta ^3+\beta^2\over \alpha  (\alpha -1)}=0.$$
Taking the numerator
we obtain $\capp( { \alpha }, { \beta })=\alpha ^3-\alpha ^2-\beta ^3+ \beta^2=0.$
By using the solution formula for cubic equations
one can express from this the implicit function
$ { \beta }( { \alpha })$, satisfying $\capp( { \alpha }, { \beta }( { \alpha }))=0$.
We will follow a different approach which will be used later for
the case when the degree of $\capp( { \alpha }, { \beta })$ is five
and one cannot use a solution formula.
We need to find the point $( { \beta }_{0}, { \beta }_{0})$ which is
an intersection point of the graph of our implicit curve $( { \alpha }, { \beta }( { \alpha }))$
 and the diagonal $( { \beta }, { \beta })$. Since  $\capp( { \beta }, { \beta })=0$,
that is, $\capp$ vanishes on the diagonal
at the point $( { \beta }_{0}, { \beta }_{0})$ the two implicit curves defined
by $\capp( { \alpha }, { \beta })=0$ intersect each other at this point. If the intersection angle is nonzero it is possible only if the first derivative of $\capp$ vanishes at
$( { \beta }_{0}, { \beta }_{0})$.

If we take
$$\frac{ {  \partial } \capp( { \alpha }, { \beta })}
{ {  \partial }  { \alpha }}= 3 \alpha ^2-2 \alpha
.$$
Substituting $ { \alpha }= { \beta }$, for seeking points on the diagonal,
we obtain that $
\alpha ={2\over 3}.
$
Since the first differential of $\capp( { \alpha }, { \beta })$ vanishes at $( { \beta }_{0}, { \beta }_{0})$ one cannot use implicit differentiation to determine $ { \beta }'( { \beta }_{0})$.
 The second differential $d^{2}\capp(( { \alpha }, { \beta }),(x,y))=(6 \alpha -2
)x^2+2\cdot 0\cdot xy+
(2-6 \beta
)y^2.$
At $( { \beta }_{0}, { \beta }_{0})$ this reduces to
$$d^{2}\capp(( { \beta }_{0}, { \beta }_{0}),(x,y))=2x^2-2y^2.$$
If we divide by $x^{2}$ and introduce the new variable $z=y/x$
then we obtain that the equation $d^{2}\capp(( { \beta }_{0}, { \beta }_{0}),(1,z))=0$ has two roots $z_{1}=-1$ and $z_{2}=1$.
This gives the slopes of the two intersecting implicit curves at
$( { \beta }_{0}, { \beta }_{0})$, the first slope $-1$ equals $ { \beta }'( { \beta }_{0})$,
while the second slope $1$ is not the least surprizing, since this is the slope
of the diagonal, which is also an implicit "curve" defined by $\capp( { \alpha }, { \beta })=0$.
The above calculation is quite simple, and shows that
the curve $\Psi^{\beta_0}( { \beta })$ corresponding to $ { \beta }_{0}=2/3$
is perpendicular to the diagonal, that is, $D_{\alpha} \Psi^{\beta_0}(\beta_0)=-1$ and this is not a good example to answer M. Misiurewicz's question.

After some experimentation the kneading sequence $RLLRC$
looked more promising.

{\bf The second attempt, consider ${ \underline { M } }=RLLRC$.}

Now
$R_{ { \alpha }, { \beta }}(L_{ { \alpha }, { \beta }}(L_{ { \alpha }, { \beta }}(R_{ { \alpha }, { \beta }}(\beta))))=R_{ { \alpha }, { \beta }}(L_{ { \alpha }, { \beta }}(L_{ { \alpha }, { \beta }}(R_{ { \alpha }, { \beta }}(L_{ { \alpha }, { \beta }}( { \alpha }))))= { \alpha }$ leads to the implicit equation
$$-{\alpha^5-2\alpha^4+\alpha^3(\beta+1)-\alpha^2\beta-\beta^4(\beta-1)\over \alpha^2(\alpha-1)^2}=0.$$
\begin{center}
\begin{figure}
\includegraphics{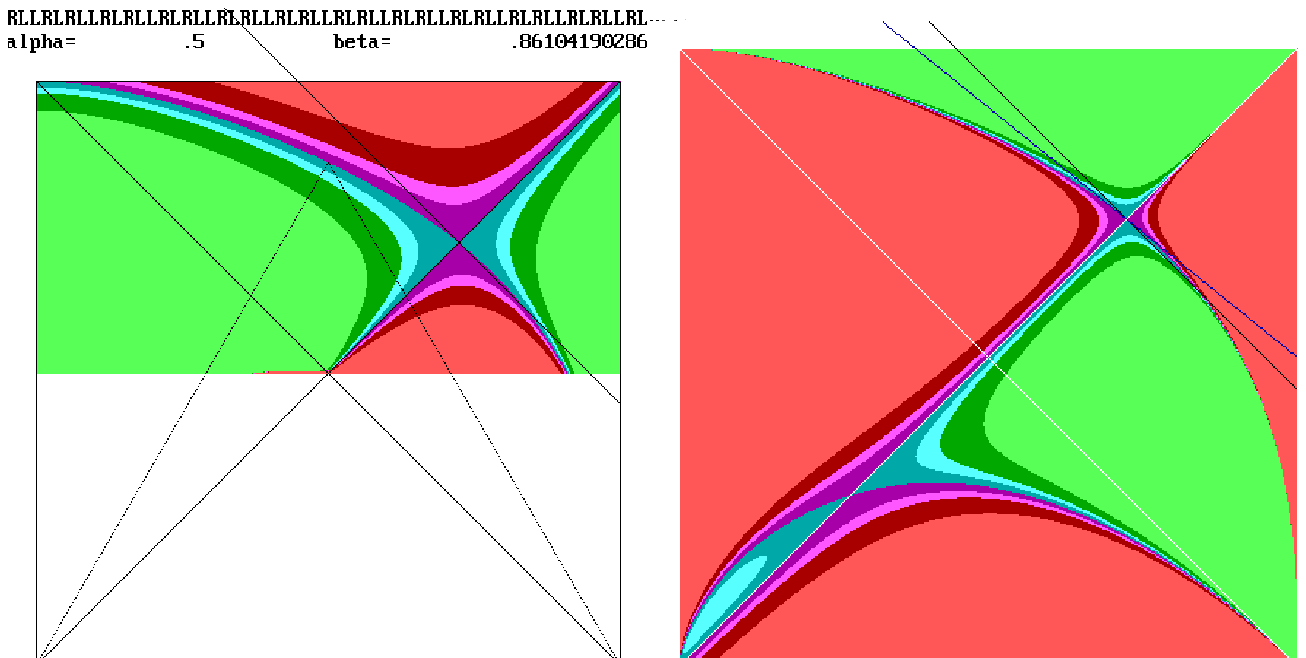}
\caption{Level set structure of $\TTT_{RLLRC}$ and $\capp( { \alpha }, { \beta })$} 
\label{thetarllrc*}
\end{figure}
\end{center}
Taking the numerator
we obtain $\capp( { \alpha }, { \beta })=\alpha^5-2\alpha^4+\alpha^3(\beta+1)-\alpha^2\beta-\beta^4(\beta-1)=0.$
Since at the point $( { \beta }_{0}, { \beta }_{0})$ the first differential
of $\capp( { \alpha }, { \beta })$ vanishes we can consider
$$\frac{ {  \partial } \capp( { \alpha }, { \beta })}
{ {  \partial }  { \alpha }}=5\alpha^4-8\alpha^3+3\alpha^2(\beta+1)-2\alpha\beta.$$
Substituting $ { \alpha }= { \beta }$, for seeking points on the diagonal,
we obtain the equation $$5\alpha^4-8\alpha^3+3\alpha^2(\alpha+1)-2\alpha^2=\alpha^2(5\alpha^2-5\alpha+1)
=0.$$
This equation has two non-zero roots
$$\alpha_{0}={2\over \sqrt{5}+5} \text{ and }
\alpha_{1}={\sqrt{5}\over10}+{1\over2}.
$$
Since $ { \alpha }_{0}<1/2$ we obtain that
$$ { \beta }_{0}=\alpha_{1}={\sqrt{5}\over10}+{1\over2}.
$$
Since the first differential of $\capp( { \alpha }, { \beta })$ vanishes at $( { \beta }_{0}, { \beta }_{0})$ one cannot use implicit differentiation to determine $ { \beta }'( { \beta }_{0})$. The second differential $d^{2}\capp(( { \alpha }, { \beta }),(x,y))=
(20 \alpha ^3-24 \alpha ^2+6 \alpha  (\beta +1)-2 \beta)x^{2}+
2(3 \alpha ^2-2 \alpha)xy+ (-4 \beta ^2 (5 \beta -3))y^{2}.$
At $( { \beta }_{0}, { \beta }_{0})$ this reduces to
$$d^{2}\capp(( { \beta }_{0}, { \beta }_{0}),(x,y))=\frac{1}{2}\Big ({\sqrt{5}\over 5}+1\Big )^2 x^{2}+
\frac{1}{2}\Big ({3 \sqrt{5}\over 5}-1\Big ) \Big ({\sqrt{5}\over 5}+1\Big )xy
-\frac{1}{2}(\sqrt{5}-1) \Big ({\sqrt{5}\over 5}+1\Big )^2y^2.$$
If we divide by $x^{2}$ and introduce the new variable $z=y/x$
then we obtain that the equation $d^{2}\capp(( { \beta }_{0}, { \beta }_{0}),(1,z))=0$ has two roots $z_{1}=
-{\sqrt{5}+3\over 2 \sqrt{5}+2}
 \approx -0.80901699437495$ and $z_{2}=1$.
The first root equals $ { \beta }'( { \beta }_{0})\not=-1$.
On the left side of Figure \ref{thetarllrc*} one can see the level set structure of $\ds \TTT_{RLLRC}$ and on the right side that of the 
polynomial $\capp( { \alpha }, { \beta })$. On the right side we also plotted the tangent line with the slope $-{\sqrt{5}+3\over 2 \sqrt{5}+2}$ and the line with slope $-1$, that is the one which is perpendicular to the diagonal. Observe that $RLLRC^-=(RLLRL)^{\oo}$ appears on the figure as the kneading sequence used by the computer.

\begin{proof}[Proof of Theorem \ref{thperp}.]
By Lemma \ref{thb0} we cannot use implicit differentiation to prove our theorem, we need to consider again the second differential instead. By \eqref{030901c}
\begin{equation}\label{030913b}
\begin{split}
\partial^2_{\alpha}   \Theta^{\beta_0}&(\beta_0,\beta_0)=\frac{2{\overline m}_1}{\beta_0^2}+\frac{{\overline m}_1({\overline m}_1-1)}{\beta_0^2} \cdot \left (\frac{\beta_0-1}{\beta_0} \right )+ \\
&+\sum_{k=2}^{\infty} \left(\frac{k(k-1)}{\beta_0^2}\left(\frac{\beta_0-1}{\beta_0}\right)^{k-2}+2\frac{k\cdot  {\overline m}_k}{\beta_0^2}\left(\frac{\beta_0-1}{\beta_0}\right)^{k-1}+\frac{{\overline m}_k({\overline m}_k-1)}{\beta_0^2}\left(\frac{\beta_0-1}{\beta_0}\right)^k\right).
\end{split}
\end{equation}
One can again use the estimates  $\eqref{030903a}$ and $\eqref{030907a}$. According to Lemma \ref{mest}, ${\overline m}_1\approx \frac{\beta_0}{1-\beta_0}$ and tends to infinity as $\beta_0 \rightarrow 1-$, in fact ${\overline m}_1\cdot  \frac{\beta_0}{1-\beta_0}\to 1.$
This means that one can rewrite $\eqref{030913b}$ the following way
\begin{equation}\label{030914a}
 \partial^2_{\alpha}  \Theta^{\beta_0}(\beta_0,\beta_0)= \frac{2{\overline m}_1}{\beta_0^2}+\frac{{\overline m}_1({\overline m}_1-1)}{\beta_0^2} \cdot \left (\frac{\beta_0-1}{\beta_0} \right )+
C_{\alpha,\alpha}(\beta_0)
\end{equation}
with $|C_{\alpha,\alpha}(\beta_0)|<C_{\alpha,\alpha}^*$ where $C_{\alpha,\alpha}^*$ is not depending on $\beta_0$.

From $\eqref{030902a}$ we obtain
\begin{equation}\label{030915a}
\begin{split}
& \partial_{\alpha} \partial_{\beta}   \Theta^{\beta_0}(\beta_0,\beta_0)= \partial_{\beta} \partial_{\alpha}   \Theta^{\beta_0}(\beta_0,\beta_0)=
- \frac{{\overline m}_1+1}{\beta_0^2} - \frac{{\overline m}_1({\overline m}_1+1)}{\beta_0^2} \left(\frac{\beta_0-1}{\beta_0}\right) + \\ &\sum_{k=2}^{\infty}\left(\frac{-k({\overline m}_k+k)}{\beta_0^2}\left(\frac{\beta_0-1}{\beta_0}\right)^{k-1}-\frac{{\overline m}_k({\overline m}_k+k)}{\beta_0^2}\left(\frac{\beta_0-1}{\beta_0}\right)^k\right).
\end{split}
\end{equation}
We can estimate this as we treated $\eqref{030913b}$ to obtain $\eqref{030914a}$, namely
\begin{equation}\label{030915b}
\begin{split}
 \partial_{\alpha} \partial_{\beta}  \Theta^{\beta_0}(\beta_0,\beta_0)=-\frac{1+\omm_{1}}{\bbb_{0}^{2}}\left (1+\omm_{1}\left (\frac{\bbb_{0}-1}{\bbb_{0}} \right ) \right )+C_{\alpha,\beta}(\beta_0).
\end{split}
\end{equation}
with $|C_{\alpha,\beta}(\beta_0)|\leq C_{\alpha,\beta}^*$ where $C_{\alpha,\beta}^*$ is not depending on $\beta_0$. Finally, we use $\eqref{030902b}$ to obtain
\begin{equation}\label{030916a}
 \partial_{\beta}^2  \Theta^{\beta_0}(\beta_0,\beta_0)=\frac{(1+{\overline m}_1)(2+{\overline m}_1)}{\beta_0^2}\left(\frac{\beta_0-1}{\beta_0}\right)+\sum_{k=2}^{\infty}\frac{(k+{\overline m}_k)(k+{\overline m}_k+1)}{\beta_0^2}\left(\frac{\beta_0-1}{\beta_0}\right)^k.
\end{equation}
From $\eqref{030916a}$ we deduce
\begin{equation}\label{030916b}
 \partial_{\beta}^2  \Theta^{\beta_0}(\beta_0,\beta_0)=
 \frac{(1+{\overline m}_1)(2+{\overline m}_1)}{\beta_0^2}\left(\frac{\beta_0-1}{\beta_0}\right)
 +C_{\beta,\beta}(\beta_0)
\end{equation}
with $|C_{\beta,\beta}(\beta_0)|\leq C_{\beta,\beta}^*$ where $C_{\beta,\beta}^*$ is not depending on $\beta_0$. By using $\eqref{030914a}$, $\eqref{030915b}$ and $\eqref{030916b}$ we obtain the following for the second differential of $  \Theta^{\beta_0}$ at $\beta_0$.
\begin{equation}\label{030917a}
\begin{split}
d^2  \Theta^{\beta_0}&((\bbb_{0},\bbb_{0})(x,y))=\frac{1}{(1-\beta_0)\beta_0}\cdot \\
& \left(  \left( 2\omm_{1}\left (\frac{1-\bbb_{0}}{\bbb_{0}} \right )-\omm_{1}(\omm_{1}-1)\left (\frac{1-\bbb_{0}}{\bbb_{0}} \right )^{2}+ \left (\frac{1-\bbb_{0}}{\bbb_{0}} \right )C_{\alpha,\alpha}(\beta_0)\right )x^2 +\right.\\ &
\left. \left ((-1-\omm_{1})\left (\frac{1-\bbb_{0}}{\bbb_{0}} \right )\left(1-\omm_{1}\left (\frac{1-\bbb_{0}}{\bbb_{0}} \right )\right)+\left (\frac{1-\bbb_{0}}{\bbb_{0}} \right )C\saab(\bbb_{0}) \right ) 2xy+
\right.
\\ &
\left.
\left((1+\omm_{1})(2+\omm_{1})(-1)\left (\frac{1-\bbb_{0}}{\bbb_{0}} \right )^2
 +\left (\frac{1-\bbb_{0}}{\bbb_{0}} \right )C_{\beta,\beta}(\beta_0) \right)y^2\right).
\end{split}
\end{equation}
using $\omm_{1}\left (\frac{1-\bbb_{0}}{\bbb_{0}} \right )\to 1$ and $\left (\frac{1-\bbb_{0}}{\bbb_{0}} \right )\to 0$
as $\bbb_{0}\to 1-$
one can see that the local level set structure of $d^2  \Theta^{\beta_0}((\bbb_{0},\bbb_{0})(x,y))$ approximates the level
set structure of $x^2-y^2=0.$ By approximation this yields that the level set $  \Theta^{\beta_0}(\alpha,\beta)=0$ in a small neighborhood of
$(\beta_0,\beta_0)$ can be approximated by parts of the lines $y=x$ and $y=-(x-\beta_0)+\beta_0$. This implies the theorem. 
\end{proof}




\begin{thebibliography}{99}

\bibitem{[AlMa]} L. Alsed\`a and F. Ma\~nosas,
The monotonicity of the entropy for a family of degree one circle maps. 
{\it Trans. Am. Math. Soc.} {\bf 334}, No. 2, 651-684 (1992). 

\bibitem{[Bas]} S. Bassein, Dynamics of a family of one-dimensional maps. {\it Amer. Math. Monthly.}, Vol. 105, No.  2, 118-30 (1998).

\bibitem{[BiBo]} L. Billings and E.M. Bollt, 
Probability density functions of some skew tent maps. 
{\it Chaos Solitons Fractals} {\bf 12}, No. 2, 365-376 (2001). 

\bibitem{[BrvS]} H. Bruin, S. van Strien, Monotonicity of entropy for real multimodal maps Preprint 2009, revised version of December 2013 - to appear in Journ. Amer. Math. Soc.



\bibitem{[SNR]} Z. Buczolich and G. Keszthelyi,
{\it Monotonicity of equi-topological entropy curves for skew tent maps in the square I.},
in preparation.

\bibitem{[SOR]} Z. Buczolich and G. Keszthelyi,
{\it Monotonicity of equi-topological entropy curves for skew tent maps in the square II.},
in preparation.


\bibitem{[CE]} P. Collet and J-P. Eckmann, 
{\it Iterated maps on the interval as dynamical systems. }
Progress in Physics, 1. Basel - Boston - Stuttgart: Birkhäuser. VII, 248 p. DM 30.00 (1980).


\bibitem{[Ic]} K. Ichimura,
Dynamics and kneading sequences of skew tent maps. 
 {\it Sci. Bull. Josai Univ. Spec. Iss.,} No. 4, 105-126 (1998).

\bibitem{[IcIt]} K. Ichimura, and  M. Ito, 
Dynamics of skew tent maps. 
{\it RIMS Kokyuroku} {\bf 1042}, 92-98 (1998). 




\bibitem{[It]} M. Ito, 
Renormalization and topological entropy of skew tent maps. 
{\it Sci. Bull. Josai Univ. Spec. Iss.,} No. 4, 127-140 (1998). 


\bibitem{[MT]} J. Milnor and W. Thurston, 
On iterated maps of the interval {\it Dynamical systems (College Park, MD, 1986–87)}, 465-563,
Lecture Notes in Math., 1342, Springer, Berlin, (1988).

\bibitem{[LT]}
T. Lindstr\"om and H. Thunberg,
An elementary approach to dynamics and bifurcations of skew tent maps. 
{\it J. Difference Equ. Appl.} {\bf 14} (2008), no. 8, 819-833. 

\bibitem{[MaVi]}
 J.C. Marcuard and  E. Visinescu, 
Monotonicity properties of some skew tent maps. 
{\it Ann. Inst. Henri Poincar\'e, Probab. Stat.} {\bf 28}, No. 1, 1-29 (1992). 

\bibitem{[MaKa]}
M. C. Mackey and  M. Tyran-Kaminska, 
Central limit theorem behavior in the skew tent map. 
{\it Chaos Solitons Fractals} {\bf 38}, No.  3, 789-805 (2008). 


\bibitem{[MiVi]} 
M. Misiurewicz and  E. Visinescu, 
Kneading sequences of skew tent maps. 
{\it Ann. Inst. Henri Poincar\'e, Probab. Stat.} {\bf 27}, No. 1, 125-140 (1991).



\end{thebibliography}
\end{document}